\def\mA{\mathbb{A}}
\def\mM{\mathbb{M}}
\def\mP{\mathbb{P}}
\def\mS{\mathbb{S}}
\def\mW{\mathbb{W}}
\def\mX{\mathbb{X}}
\def\mY{\mathbb{Y}}
\def\mZ{\mathbb{Z}}
\def\cE{\fs{E}}
\def\cF{\fs{F}}
\def\cI{\mc{I}}
\def\bE{\mathbf{E}}
\def\bJ{\mathbf{J}}
\def\bU{\mathbf{U}}
\def\bV{\mathbf{V}}
\def\bk{\mathbf{k}}
\def\bx{\mathbf{x}}
\def\bu{\mathbf{u}}
\def\btau{\boldsymbol\tau}
\def\FT{\mc{T}}
\def\FS{\mc{S}}
\def\d{\partial}
\definecolor{lightgray}{gray}{0.75}
\colorlet{shadecolor}{gray!50}
\newtheorem{thm}{Theorem}
\newtheorem{lem}[thm]{Lemma}
\theoremstyle{definition}
\newtheorem{exper}{Experiment}
\theoremstyle{remark}
\newtheorem*{rem}{Remark}
\numberwithin{equation}{section}
\newcommand{\ep}{\epsilon}
\newcommand{\w}{\omega}
\newcommand{\Dv}{{\bf D}}
\newcommand{\Ev}{{\bf E}}
\newcommand{\Jv}{{\bf J}}
\newcommand{\Lv}{{\bf L}}
\newcommand{\phiv}{\boldsymbol\phi}
\newcommand{\mr}[1]{{\mathrm{#1}}}
\newcommand{\mc}[1]{{\mathcal{#1}}}
\newcommand{\scurl}{\text{curl}}
\newcommand{\vcurl}{\text{\bf{curl}}}
\newcommand{\od}[1]{{\frac{\partial}{\partial #1}}}
\newcommand{\ods}[1]{{\frac{\partial^2}{\partial {#1}^2}}}
\newcommand{\dd}[2]{{\partial_{#1} #2}}
\newcommand{\pd}[2]{{\partial^2_{#1} #2}}
\newcommand{\mb}[1]{{\mathbf{#1}}}
\newcommand{\fs}[1]{\mathscr{#1}}
\newcommand{\ds}{\displaystyle}
\newcommand{\e}{\mathrm{e}}
\renewcommand{\i}{\mathrm{i}}
\newcommand{\dx}{\Delta x}
\newcommand{\dy}{\Delta y}
\newcommand{\dt}{\Delta t}
\title{A DISPERSION MINIMIZED MIMETIC METHOD FOR A COLD PLASMA MODEL}
\author{Vrushali A. Bokil$^1$, Vitaliy Gyrya$^2$, and Duncan A. McGregor$^1$}
\begin{document}
\maketitle

\begin{center}
$^1$Oregon State University\\
  Department of Mathematics, Corvallis, OR 97331\\
  e-mail: \{bokilv,mcgregod\}@math.oregonstate.edu\\
  $^2$Los Alamos National Laboratory \\
  T-5, Applied Mathematics and Plasma Physics, Los Alamos, NM\\
  e-mail: vitaliy$\_$gyrya@lanl.gov
\end{center}

\noindent {\bf Keywords}: Maxwell's Equations, Mimetic Methods, Cold Plasma, Exponential Time Differencing, Numerical Dispersion Relation, Yee Scheme.

\medskip

\noindent {\bf MSC (2000)}: 65M06, 65M60, 78M20, 78M10

\medskip

\noindent {\bf Abstract}: In this paper we consider the lowest edge-based mimetic finite difference (MFD) discretization in space for Maxwell's equations in cold plasma on rectangular meshes. The method uses a generalized form of mass lumping that, on one hand, eliminates a need for linear solves at every iteration while, on the other hand, retains a set of free parameters of the MFD discretization. We perform an optimization procedure, called m-adaptation, that identified a set of free parameters that lead to the smallest numerical dispersion.
The choice of the time stepping proved to be critical for successful optimization. Using exponential time differencing we were able to reduce the numerical dispersion error from second to fourth order of accuracy in mesh size. It was not possible to achieve this order of magnitude reduction in numerical dispersion error using the standard leapfrog time stepping. Numerical simulations independently verify our theoretical findings.

\section{Introduction}
\label{intro}

A variety of numerical techniques are available in the literature for the simulation of Electromagnetic (EM) wave propagation in
linear dispersive media, the gold standard being the Yee Finite difference Time Domain (FDTD) method \cite{kashiwa1990treatment}.
EM wave propagation in a medium is modeled by Maxwell's Equations, a vector system of Partial Differential Equations (PDEs), that govern the evolution of the EM field, along with appropriate constitutive equations for the response of the medium to the EM field. The Yee scheme is a FDTD method that simultaneously discretizes Maxwell's equations along with the constitutive laws for the medium to produce a second order accurate discretization in space and time. However, the second order numerical dispersion errors that arise in the discrete solution
obtained using the Yee scheme can lead to large errors over long time integration on electrically large domains. Thus, the construction of numerical methods with high order numerical dispersion errors for linear dispersive media, which is the goal of this paper, is crucial to the accurate simulation of EM waves in such media.

In this paper, we consider the simulation of EM waves in a cold isotropic plasma,
a type of linear dispersive medium. The model for cold plasma is based on the Auxiliary Differential Equation (ADE) approach
in which an Ordinary Differential Equation (ODE) for the evolution of the time derivative of the macroscopic polarization (polarization current density) is appended to Maxwell's equations to produce a hybrid PDE-ODE system.
The evolution ODE for the polarization current density models the averaged response of the material to the electromagnetic field.
We present the construction of a dispersion minimized numerical method for Maxwell's equations in a cold plasma by performing a novel optimization procedure, called {\it m-adaptation},
on a family of numerical methods for the cold plasma model based on the Mimetic Finite Difference (MFD)
method in space and Exponential Time Differencing (ETD) in time.

MFD methods are a flexible family of methods that are based on general polygonal and polyhedral meshes, see \cite{Lipnikov-Manzini-Shashkov:2014} for a comprehensive review.
The word mimetic indicates the fact that they mimic/preserve in the discrete settings some properties of the continuous equations.
The MFD construction is generally non-unique and leads to a parameterized family of methods
with equivalent properties, such as stencil size and base convergence rate among others.
Many of the classical discretizations are contained within the MFD family,
e.g. Yee scheme on rectangular elements.
The number of parameters characterizing the scheme grows rapidly with the dimension, the number of vertices in a polygonal element, and the order of the discretization.
The parameterized nature of the family of MFD methods presents an opportunity for optimization
for some desired properties.
M-adaptation, as introduced in \cite{da2014mimetic,gyrya2014m},
is the process of selecting an optimal member of the family of mimetic schemes for a
selected optimization criteria (in this case, minimization of numerical dispersion).
In \cite{gyrya2014m} other optimization criteria were analysed.

We have previously considered the problem of optimizing numerical dispersion error in models of electromagnetic wave propagation in free space \cite{bokil2015dispersion}. In this earlier work, we started with the parameterized family of MFD schemes that all have second (base) order of numerical dispersion error on rectangular meshes.
Through m-adaptation we produced a method that has fourth order numerical dispersion error.
The present paper extends our prior work to a cold plasma model which, as discussed above, appends an additional evolution equation for the polarization current density to Maxwell's equations. The extension of m-adaptation to the cold plasma model proved to be non-trivial, as illustrated by our first failed attempt presented in the appendix section of this paper.
It turns out, that for linear dispersive media, the choice of the time discretization scheme is critical for the m-adaptation technique to produce a high order method.
The Leapfrog time differencing method,
which we previously employed in \cite{bokil2015dispersion}
for the case of EM propagation in free space,
does not allow m-adaptation to produce a higher order method for the cold plasma model.
Instead, replacing the Leapfrog time differencing with ETD
allows for successful optimization over the family of fully discrete MFD schemes.
The optimal Exponential Time Mimetic Finite Difference (ETMFD) method, produced by m-adaptation, has a fourth order numerical dispersion error as compared to the
(base) second order error for the rest of the ETMFD schemes in this family on rectangular meshes.

The ETD method was originally introduced in computational electromagnetism as a scheme for handling stiff problems, such as computing the
electric and magnetic fields in a box surrounded by perfectly matched layers \cite{cox2002exponential}. For these problems, explicit time-stepping, for example the Leapfrog time differencing method, requires an extremely small time step in order to be stable. On the other hand implicit schemes that are unconditionally stable can be costly to implement in three dimensions. ETD involves an exact integration of some of the lower order linear terms in the governing equations, and higher order accuracy can be obtained by using a higher order discretization of the resulting integral terms. However, the ETD approach has been shown to offer no major advantages over the time averaging of the lower order linear terms in the Yee scheme, for alleviating stiffness. In some cases, ETD may be less efficient by necessitating smaller step sizes \cite{petro1997etd}.

We would like to emphasize that the reason for the choice of ETD in our work is not for handling stiffness, but rather that it is a good candidate for a time discretization method which allows for successful optimization in the m-adaptation technique.
In contrast to other numerical methods for the cold plasma model that use ETD discretization only for the equation of polarization current density \cite{cummer1997fdtdcip}, our ETMFD method is a discretization of a hyrbrid PDE-ODE system modeling the evolution of the polarization current density and electric field forced by spatial derivatives of field variables.
%

The outline of the paper is as follows.
In Section~\ref{model} we present first and second order PDE models
for cold plasma, and the corresponding weak formulations.
The lowest order edge based MFD discretization for the electric field and current density on rectangular meshes is presented in Section~\ref{spacediscrete}, while the exponential time difference discretization
for scalar and vector equations is presented in Section~\ref{timediscrete}.
In Section~\ref{fulldiscrete} we present a fully discrete ETMFD family of discretizations
that employs a generalized form of mass-lumping
to produce a fully explicit scheme to avoid the need for linear solves at every time step.
In Section \ref{madapt}
we derive the numerical dispersion relation for this family of discrete schemes and
choose the set of MFD parameters that produces a method with the lowest numerical dispersion error.
This optimization requires numerical dispersion properties derived in Sections~\ref{spacediscrete} and \ref{timediscrete}.
In Section \ref{num} we present results of numerical simulations that
independently validate our theoretical results.
In Section \ref{conclude} we present some concluding remarks.
Finally, in Appendix \ref{appndx}, we demonstrate that a straightforward extension of our ideas
developed in \cite{bokil2015dispersion}
does not allow the m-adaptation process to reduce numerical dispersion error, thus,
demonstrating the novelty of the ETMFD method presented in this paper.

\section{Maxwell's Equations in a Cold Plasma}
\label{model}

The Cold  Plasma (CP) model is a special case of the Lorentz model \cite{kashiwa1990treatment}
 governing the evolution of electromagnetic waves in partially ionized gases
without magnetization effects.
It consists of Maxwell's equations along with evolution equations
for the time derivative of the macroscopic electric polarization field. This time derivative is called the polarization current density.
Suppose $\Omega \subset \mathbb{R}^2$ and $T>0$.
Maxwell's equations governing the evolution of wave propagation on $\Omega \times [0,T]$
relate the electric field intensity $\textbf{E}$, and the magnetic flux density $B$ as
\begin{subequations}\label{eq:Maxwell equations}
\label{eq:max}
\begin{align}
    & \frac{\partial}{\partial t}\textbf{E}
    =
    c_0^2\ \vcurl \ B-\frac{1}{\epsilon_0}\textbf{J}, \\
    & \frac{\partial}{\partial t}B
    =
    - \scurl \ \textbf{E},
\end{align}
\end{subequations}
where $c_0$, and $\epsilon_0$, are the speed of light,
and the electric permittivity of free space, respectively.
The vector $\textbf{J}$ is the polarization current density and is modeled by the evolution equation
\begin{equation} \label{eq:evolutionJ}
    \displaystyle \od t \mb J + \w_i\mb J = \ep_0\w_P^2\mb E.
\end{equation}
Here $\w_i$ is the ion collision frequency and $\w_P$ is the plasma frequency.
For a vector field ${\bf f} = (\mb{f}_x,\mb{f}_y)^T$ and
for a scalar field $f$ we define
the scalar ($\scurl$) and
vector ($\vcurl$) curl operators as follows,
\[
    \scurl({\bf f}) := \od{x}{\bf f}_y-\od{y}{\bf f}_x,
    \qquad
    \vcurl(f):=\left(\od{y}f, -\od{x}f\right)^T.
\]
All the fields in the system \eqref{eq:max}-\eqref{eq:evolutionJ} are functions of
position $\mb{x} = (x,y)^T$ and time $t \in [0,T]$.
We also assume perfect electrical conductor (PEC) boundary conditions.
\begin{equation}
\textbf{E} \times \textbf{n} = 0, \ \text{on} \ \partial\Omega \times [0,T],
\end{equation}
where $\textbf{n}$ is a unit outward vector to the boundary $\Omega$.
The equations \eqref{eq:Maxwell equations} and \eqref{eq:evolutionJ}
are subject to appropriate initial conditions.

The first order equations (\ref{eq:Maxwell equations}-\ref{eq:evolutionJ})
can be written in an equivalent second order formulation,
which we call the {\bf Maxwell-CP Model},
by eliminating the magnetic flux density field $B$ as
\begin{align}\label{eq:MECP}
\left\{
\begin{array}{ll}
    \ds \ods t \mb E = -\frac{1}{\ep_0}\od t \mb J -c_0^2\ \vcurl \ \scurl \ \mb E,\qquad
    & \text{in} \hspace{4mm}\Omega \times (0,T], \\[1.5ex]
    \ds \od t \mb J = -\w_i\mb J + \ep_0\w_p^2\mb E,
    & \text{in} \hspace{4mm}\Omega \times (0,T], \\[1.5ex]
    \mb E\times\mb n =0, & \text{on}  \ \partial\Omega \times (0,T],
\end{array}
\right .
\end{align}
along with appropriate initial conditions.
We will construct a MFD discretization based on this second order formulation.
%

\subsection{Variational Formulation}

The MFD discretization, just like a finite element formulation, will be constructed
based on the weak form of \eqref{eq:MECP}. To this end, we consider the Sobolev spaces
\begin{align*}
    \Lv^2(\Omega) &= [L^2(\Omega)]^2,\\
    \mb H(\scurl, \Omega ) &= \{ {\bf v}\in\Lv^2(\Omega): \scurl \ {\bf v} \in L^2(\Omega)\},\\
    \mb H_0(\scurl, \Omega ) &= \{ {\bf v}\in  \mb H(\scurl, \Omega ), \mb v\times\mb n =0, \ \text{on}  \ \partial\Omega\}.
\end{align*}
The weak formulation of \eqref{eq:MECP} is obtained in a standard way.
Multiply the first, and the second equation, in \eqref{eq:MECP}
by test functions, $\phiv \in  \mb H_0(\scurl, \Omega )$, and $\psi \in  \Lv^2(\Omega)$, respectively, and integrate over the domain $\Omega$.
The weak formulation reads\\
\emph{Find}
$\Ev \in C^2([0,T];\mb H_0(\scurl, \Omega ) )$, \emph{and} $\Jv \in C^1([0,T]; \Lv^2(\Omega) )$,
\emph{subject to appropriate initial conditions, such that for all} $\phiv \in  \mb H_0(\scurl, \Omega ) $ and $\psi \in \Lv^2(\Omega)$ we have
\begin{align}
    \label{eq:varevecwave}
    \left[{\Ev_{tt}}, \phiv\right]_{E}
    +
    c_0^2\left[ (\scurl \ \Ev),  (\scurl \ \phiv) \right]_F
    +
    \frac{1}{\epsilon_0}
    \left[\Jv_t,  \phiv\right]_{E}
    =
    0,
    \\
    \label{eq:varevecwave2}
    \left[\Jv_t,  \psi\right]_E
    +
    \w_i
    \left[\Jv, \psi\right]_E
    -
    \epsilon_0\w_p^2
    \left[\Ev, \psi\right]_E
    =
    0,
\end{align}
where the bilinear forms are defined as follows:
\begin{equation}
  \label{eq:def:bilin forms E n F}
  [\Jv,\Ev]_E := \int_\Omega \Jv \cdot \Ev \ d\Omega, \qquad
  [J,E]_F := \int_\Omega J\  E \ d\Omega.
\end{equation}
Here $(\Jv,\Ev)$ are vector functions, and $(J,E)$ are scalar functions.

\subsection{Dispersion Relation}
\label{sec:dispersion relation}

In this paper our aim is to construct a numerical method for the Maxwell-CP model \eqref{eq:MECP} that is the optimal method
chosen from a family of schemes by minimizing for numerical dispersion error.
Thus, in this section,
we present a brief overview of continuous and numerical dispersion relations and
their connections to symbols of differential operators.

Given a plane wave
\begin{equation}\label{eq:planewave}
  \bu(t,\bx):=\e^{\i(\mb k\cdot\mb x-\w t)} \bu_0,
\end{equation}
a continuous or discrete
\emph{dispersion relation} is a relation between the frequency $\w$
and the wave vector $\mb k$, under which $\bu$, or its restriction to a discrete grid, is a solution of a continuum PDE, or its discrete approximation, respectively.

Consider an abstract linear equation
\begin{equation}\label{eq:abstract linear equation}
  \mathcal{L}_t\{\bu\}
  =
  \mathcal{L}_\bx\{\bu\},
\end{equation}
where $\mathcal{L}_t$ and $\mathcal{L}_x$
are linear operators corresponding to time and space, respectively,
and $\bu$ is either a continuous plane wave \eqref{eq:planewave} or its discrete representation.
For example, $\mathcal{L}_t$ and $\mathcal{L}_\bx$ could be the
continuous differential operators
$\mathcal{L}_t\{\bu\} = \frac{\d^2}{\d t^2}\bu$
and
$\mathcal{L}_\bx\{\bu\} = \triangle \bu$
or their discrete approximations.

As it turns out, for all linear operators $\mathcal{L}_t$ and $\mathcal{L}_x$
(continuous or discrete)
considered in this paper
the plane wave \eqref{eq:planewave} is a generalized eigenfunction, i.e.
\begin{equation}\label{eq:eigen function}
  \mc L_t\{\bu\} = \FT(\w) \bu
  \qquad\text{and}\qquad
  \mc L_x\{\bu\} = \FS(\bk) \bu,
\end{equation}
where $\FT(\w)$ and $\FS(\bk)$ are square matrices acting on $\bu_0$, i.e.
\[
    \FT(\w ) \bu = \e^{\i(\bk\cdot\bx-\w t)}\ \FT(\w )\bu_0
    \qquad\text{and}\qquad
    \FS(\bk) \bu = \e^{\i(\bk\cdot\bx-\w t)}\ \FS(\bk)\bu_0.
\]
If $\FT(\w )$ and $\FS(\bk)$ were scalars, they would be eigenvalues and
$\bu$ would be the eigenfunction of $\mathcal{L}_t$ and $\mathcal{L}_x$.
Since, in general, they are not scalars
we refer to $\bu$ as a generalized eigenfunction and
call $\FT(\w )$ and $\FS(\bk)$ - symbols of linear operators
$\mathcal{L}_t$ and $\mathcal{L}_x$, respectively.

Substituting \eqref{eq:eigen function} into \eqref{eq:abstract linear equation}
and cancelling the exponential terms $\e^{\i(\bk\cdot\bx-\w t)}$ on both sides
we obtain a dispersion relation written in terms of symbols of linear operators
\begin{equation}\label{eq:dispersion through symbols}
  \FT(\w)\bu_0 = \FS(\bk)\bu_0.
\end{equation}

In this work we are primarily concerned with \emph{dispersion error}, which can be defined as the absolute value of the difference between the frequency $\omega(\mb k)$, solution to the continuous dispersion relation, and its discrete counterpart, $\omega_{\Delta t,h}(\mb k)$, solution to the discrete dispersion relation. There are other ways in which the dispersion error can be defined as discussed in Section \ref{num}.  Dispersion error is the result of frequency dependent speed of propagation of plane waves in the discretized grid regardless of whether the continuum solution has such frequency dependent propagation or not. In particular, the speed of propagation of waves in the discrete grid always differs from that in the continuum case and is commonly observed as non-physical oscillations in discrete solutions.
Thus, if $\FT_{\dt}$ is a discrete approximation of $\FT$, and $\FS_h$ is a discrete approximation of $\FS$ then we have
\begin{align}
\FT_{\dt} - \FS_h = \FT-\FS + \mc O(h^\alpha).
\end{align}
Where $\dt>0, h >0$ are mesh resolution parameters. It will be our goal to find $\FT_{\dt}$ and $\FS_h$ so that $\alpha$ is as large as possible reducing the discrepancy in wave speed.

\section{Mimetic Finite Difference Discretization in Space}
\label{spacediscrete}

A mimetic finite difference discretization of the continuous variational formulation
(\ref{eq:varevecwave}-\ref{eq:varevecwave2})
has the form
\begin{align}
    \label{eq: disc wave 1}
    \left[{(\Ev_h)_{tt}}, \phiv_h\right]_\cE
    +
    c_0^2\left[ (\scurl_h \ \Ev_h),  (\scurl_h \ \phiv_h) \right]_\cF
    +
    \frac{1}{\epsilon_0}
    \left[(\Jv_h)_t,  \phiv_h\right]_\cE
    =
    0,
    \\
    \label{eq: disc wave 2}
    \left[(\Jv_h)_t,  \psi_h\right]_\cE
    +
    \w_i
    \left[\Jv_h, \psi_h\right]_\cE
    -
    \epsilon_0\w_p^2
    \left[\Ev_h, \psi_h\right]_\cE
    =
    0.
\end{align}
Here $\Ev_h$, and $\Jv_h$, are discrete approximations of the solutions $\Ev$, and $\Jv$,
respectively;
$\phiv_h$ and $\psi_h$ are discrete test functions;
$\scurl_h$ is a discrete linear operator approximating  its continuous counterpart
$\scurl$.
The bilinear forms
$\left[\cdot, \cdot\right]_\cE$ and
$\left[ \cdot,  \cdot \right]_\cF$
are discrete approximations of the continuous bilinear forms
$\left[\cdot, \cdot\right]_E$ and
$\left[ \cdot,  \cdot \right]_F$
defined in  \eqref{eq:def:bilin forms E n F}.
We will now make all of the above precise.


\subsection{Discrete Spaces and Interpolation}

Let $\fs T$ be a polygonal partitioning (mesh) of the domain $\Omega$.
Let $\fs E$, and $\fs F$, be the set of all edges $e$, and faces (cells) $f$, respectively, of the mesh $\fs T$.
In the discrete form every function will be represented in terms of a finite number of values
called degrees of freedom (DoF) assembled into a vector
(e.g. $\Ev_h$, $\Jv_h$)
assuming some ordering of these DoF.
Each DoF will be associated either to an edge or to a face/element.
The DoF of scalar functions (e.g. $\scurl\, \Ev$)
will be associated with faces only (one DoF per face)
and can be interpreted as an average value of the function over the face/cell.
The DoF of vector functions (e.g. $\Ev$ and $\Jv$)
will be associated with edges only (one DoF per edge)
and can be interpreted as an average value of the tangential component of the vector function
along the edge.
See Figure~\ref{fig:DoF} for illustration.

\begin{figure}[h!]
  \centering
  \includegraphics[width=.5\textwidth]{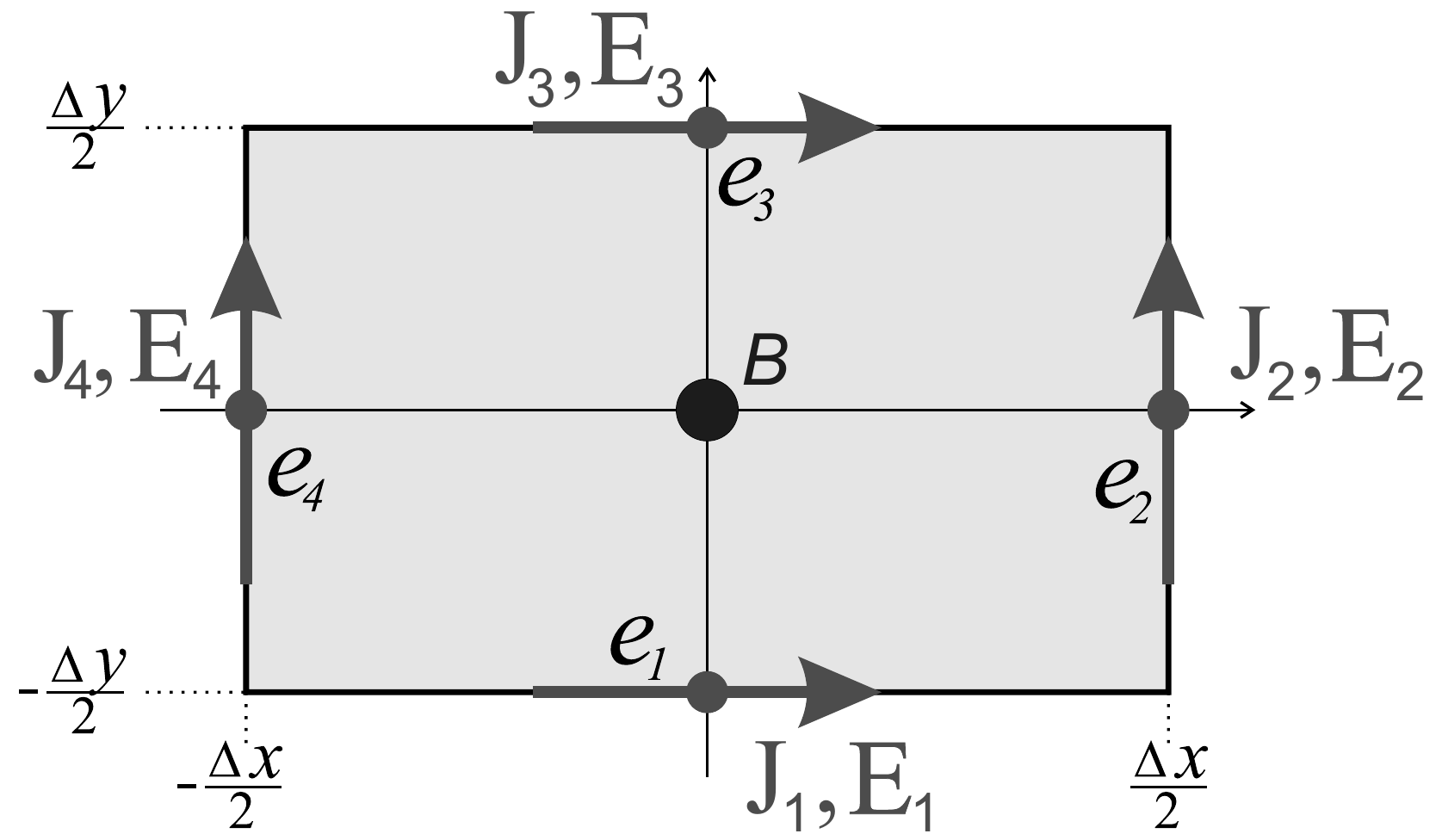}\\
  \caption{Illustration of DoF for vector functions, marked as $\Ev$,
  and scalar functions, marked as $B$ on a rectangular element centered at the origin.}
  \label{fig:DoF}
\end{figure}

We will denote the discrete space corresponding to vector functions as $\fs E_h$
and the discrete space corresponding to scalar functions as $\fs F_h$.
We define interpolation operators $\cI^{\fs E_h}$, and $\cI^{\fs F_h}$,
as linear operators that for each function in
$\mb H(\scurl,\Omega)$, and $L^2$, assigns a vector of DoF in $\fs E_h$, and $\fs F_h$,
respectively.

The interpolation operator  $\cI^{\fs E_h}$, and the vector of DoF for vector functions are defined as
\begin{align}
    &\cI^{\fs E_h}:\mb H(\scurl,\Omega)\to \fs E_h, \qquad
    \bE_e := \displaystyle\frac{1}{|e|}\int_e \bE\cdot\boldsymbol\tau_e \ de,\label{eq:edof}\\
    &\cI^{\fs E_h}(\bE)
    =
    \bE_h
    :=
    \begin{pmatrix}
        \bE_e
    \end{pmatrix}_{e\in\fs E},
\end{align}
where $\boldsymbol\tau_e$ is a counter-clockwise tangent to the edge $e$. The interpolation operator  $\cI^{\fs F_h}$ for scalar functions, and the vector of DoF for scalar functions are defined as
\begin{align}
    &\cI^{\fs F_h}:L^2\to\fs F_h, \qquad
    B_f:= \displaystyle\frac{1}{|f|}\int_f B \ df,\\
    &\cI^{\fs F_h}(B)
    =
    B_h
    :=
    \begin{pmatrix}
        B_f
    \end{pmatrix}_{f\in\fs F}.
\end{align}

\subsection{Discrete Inner Products and the Adjoint Curl}

Discrete bilinear forms
$\left[\cdot, \cdot\right]_\cE$ and
$\left[ \cdot,  \cdot \right]_\cF$
can be represented by square matrices
and are defined through a standard assembly process
\begin{eqnarray}
  (\Ev_h)^T \mM_\cE \Dv_h
  =
  \left[\Ev_h, \Dv_h \right]_\cE
  &:=&
  \sum_{f\in \fs F}
  \left[\Ev_{h}, \Dv_{h} \right]_{\cE,f}
  =
  \sum_{f \in \fs F}
  (\Ev_{h,f})^T \mM_{\cE,f} \Dv_{h,f},
  \\
  (B_h)^T \mM_\cF M_h
  =
  \left[B_h, M_h \right]_\cF
  &:=&
  \sum_{f \in \fs F}
  \left[B_h, M_h \right]_{\cF,f}
  =
  \sum_{f \in \fs F}
  (B_{h,f})^T \mM_{\cF,f} M_{h,f},
\end{eqnarray}
where
$\left[\Ev_h, \Dv_h \right]_{\cE,f}$
and
$\left[B_h, M_h \right]_{\cF,f}$
are to be defined locally on each face $f$.

For scalar functions the definition for the local bilinear form is the simplest
\begin{equation}\label{eq:scalar mass}
  \left[B_{h}, M_{h} \right]_{\cF,f}
  :=
  |f| B_{h,f}^T M_{h,f},
  \qquad
  \text{i.e.}\quad
  \mM_{\cF,f}
  =
  |f|,
\end{equation}
where $|f|$ is the area of the face $f$.

Next, we define the discrete $\scurl_h$ operator
as a mapping from the discrete space $\cE_h$, approximating vector functions,
to the discrete space $\cF_h$, approximating scalar functions.
The definition will be made locally on each face $f$ through the identity
\begin{equation}\label{eq:def curlh}
  [\psi_h,\scurl_h \Ev_{h}]_{\cF,f}
  =
  \int_f
  \psi\, \scurl(\Ev)\ df,
\end{equation}
which must hold for any constant $\psi$ and any vector function $\Ev$ whose local discrete representation on the face $f$ is $\Ev_{h,f}$.
Identity \eqref{eq:def curlh} defines $\scurl_h$ uniquely since the RHS of \eqref{eq:def curlh}
can be expressed in terms of the vector of DoF $\Ev_{h,f}$ of $\Ev$ on the face $f$ only
using integration by parts
\begin{equation}\label{eq:int by parts for curl}
  \int_f
  \psi\, \scurl(\Ev)\ df
  =
  \psi
  \sum_{e\in \d f}\int_e
  \Ev\cdot \btau_e\ de,
\end{equation}
where $\boldsymbol\tau_e$ is a counter-clockwise tangent to the edge $e$ of the face $f$.
The edge integrals in \eqref{eq:int by parts for curl} are exactly the DoF of $\Ev$, defined in \eqref{eq:edof}, up to orientation of the edges.
On a rectangular element the local matrix $\scurl_h$ is a column vector of length four given by
\begin{align}
    \scurl_{h,f}
    =
    \frac{1}{\dx\dy}(\dx,\dy,-\dx,-\dy)^T.
\end{align}

The construction of the mass matrix $\mM_\cE$ contains many details
that are not necessary for performing dispersion reduction analysis that is the center point of this paper.
Therefore, we limit ourselves to presenting the final form of the matrix on a rectangular mesh
and refer interested readers to \cite{bokil2015dispersion} for all details.

In fact, instead of computing $\mM_{\cE,f}$ we compute an approximation to
its inverse
\begin{align}
    \mW_{\cE,f} \approx \mM_{\cE,f}^{-1},\qquad
    \mW_{\cE,f}
    =
    \frac{1}{4\dx\dy}
    \left(
      \begin{array}{rrrr}
        1+4w_1 & 4w_2 & 1-4w_1 & -4w_2 \\
        4w_2 &1+4w_3 & -4w_2 & 1-4w_3 \\
        1-4w_1 & -4w_2 &1+4w_1 & 4w_2 \\
        -4w_2 & 1-4w_3 & 4w_2 & 1+4w_3
      \end{array}
    \right).
\end{align}
Here $w_1,w_2$ and $w_3$ are free parameters.
Different values of these parameters give rise to different numerical schemes, as discussed in \cite{bokil2015dispersion}. In particular, this family of matrices $\mW_{\cE,f}$ contains the Yee-scheme as one of its members.
In order to recover the famous Yee-FDTD stencil, $\mW_\cE=\frac{1}{2\dx\dy}\mathbb I$,
one has to take $w_1=w_3=\frac{1}{4}$ and $w_2=0$.


Our global matrices are then assembled in the usual way for every face $f$.
As we are considering a second order formulation we introduce the following discrete curl-curl operator,
\begin{align}
    \mA_h
    =
    (\scurl_h)^T\mM_{\fs F} \scurl_h.
\end{align}
This matrix can be assembled from local matrices $(\scurl_{h,f})^T\dx\dy \  \scurl_{h,f}$.

\subsection{Discrete in Space Continuous in Time Formulation on Rectangular Meshes}

A discrete in space continuous in time formulation of the second order Maxwell-CP system
is as follows:\\
\emph{Find} $\bE_h\in C^2([0,T],\fs E_h\cap \mb H_0(\scurl,\Omega))$ \emph{and} $\bJ_h\in C^1([0,T],\fs E_h)$ \emph{such that:}
\begin{equation}
\label{eq:disc space cont time}
    \left\{
    \begin{array}{ll}
    \ds\ods t \bE_h + \frac{1}{\ep_0} \od t \bJ_h =-c_0^2\mW_\fs E \mA_h \bE_h, \\
    \ds \od t \bJ_h= -\w_i\bJ_h +\ep_0\w_P^2 \bE_h.
    \end{array}
    \right.
\end{equation}

\subsection{Dispersion Analysis for Discrete in Space Continuous in Time Formulation}
\label{sect:daspace}

To obtain dispersion relations we assume plane wave solutions for the electric field and the polarization current density. Thus, in the discrete in space and continuous in time formulation \eqref{eq:disc space cont time}, we represent the spatially discrete electric field, $\bE_h(t)$, and polarization current density, $\bJ_h(t)$, as
\begin{equation}\label{eq:discrete wave}
\begin{split}
    \bE_h(t) &= \cI^{\fs E_h} \left(\bE_0 e^{\i(\mb k\cdot\mb x)} \right)E(t),\\
    \
    \bJ_h(t) &= \cI^{\fs E_h} \left(\bJ_0 e^{\i(\mb k\cdot\mb x)} \right)\ J(t).
\end{split}
\end{equation}
Here $\bE_0$ and $\bJ_0$ are two-dimensional vectors
that specify the initial intensity and the orientation of the corresponding vector fields.
The continuous plane wave vector fields are characterized by the wave vector $\bk$ (whose magnitude
we refer to as the wave number $k$)
and the vectors $\bE_0$ and $\bJ_0$.
In the spatially discrete setting a similar characterization is made. We use $U_i$ to denote a DoF for either the electric field or polarization current density. The role of the vectors $\bE_0$ and $\bJ_0$ is assumed by two DoF associated with two adjacent orthogonal edges, as depicted in Figure \ref{fig:l2g_neumann}. The DoF $U_1$ is associated to a horizontal edge, while the DoF $U_2$ is associated to a vertical edge.
Other DoF, $U_i$, for the waves of the form
\eqref{eq:discrete wave}
can be written in terms of $U_1$, $U_2$, and the exponent $e^{\i(\mb{k}\cdot \mb{\dx_i})}$ as
\begin{equation}\label{eq:all dofs}
\begin{split}
  U_i = U_1 e^{\i(\mb{k}\cdot \mb{\dx_i})}
  &\qquad \text{for a horizontal edge }e_i,\\
  U_i = U_2 e^{\i(\mb{k}\cdot \mb{\dx_i})}
  &\qquad \text{for a vertical edge }e_i,
\end{split}
\end{equation}
where $\mb{\dx_i}$ is the ``shift''-vector from the center of the edge $e_1$
or $e_2$ to the center of the edge $e_i$, respectively.
In particular,
the vector of four DoF $(U_1,U_2,U_3,U_4)^T$,
corresponding to the element $f$,
can be written in terms of the vector of the first two DoF $(U_1,U_2)^T$
and $\bk=(k_x,k_y)^T$ as follows:
\begin{equation}\label{eq:def:S}
    \left(
      \begin{array}{c}
        U_1 \\
        U_2 \\
        U_3 \\
        U_4 \\
      \end{array}
    \right)
    =
    \mS
    \left(
      \begin{array}{c}
        U_1 \\
        U_2 \\
      \end{array}
    \right),
    \qquad
    \text{where}
    \quad
    \mS
    =
    \begin{pmatrix}
        1 & 0 \\
        0 & 1 \\
        \e^{\i k_y\dy} & 0 \\
        0 & e^{-\i k_x \dx}
    \end{pmatrix}.
\end{equation}

In the semi-discrete formulation \eqref{eq:disc space cont time},
we perform several multiplications of waves of the form \eqref{eq:discrete wave}
by global matrices $\mA$ and $\mW$ assembled from identical local matrices
$\mA^f$ and $\mW^f$.
The following result presents a simple way of performing such a multiplication.

\begin{lem}\label{eq:single matrix multiplication}
  Consider the result of multiplication
  \begin{equation}\label{eq:simple multiplication}
    \bV = \mZ \bU,
  \end{equation}
  where the global matrix $\mZ$ is assembled from identical local matrices $\mZ^f$
  and the vector of DoF $\bU$ has the form \eqref{eq:all dofs}
  Then the vector of DoF $\bV$ also has the form \eqref{eq:all dofs}
  The vector of two DoF $(V_1,V_2)^T$ characterizing the vector $\bV$
  depends linearly on the vector of two DoF $(U_1,U_2)^T$
  characterizing the vector $\bU$;
  the two-by-two matrix corresponding to the linear mapping
  has the form $(\mS^* \mZ^f \mS)$:
  \begin{equation}\label{eq:linear relation u12 v12}
    \left(
      \begin{array}{c}
        V_1 \\
        V_2 \\
      \end{array}
    \right)
    =
    (\mS^* \mZ^f \mS)
    \left(
      \begin{array}{c}
        U_1 \\
        U_2 \\
      \end{array}
    \right),
  \end{equation}
  where
  $\mS$ was defined in \eqref{eq:def:S} and
  $\mS^*$ is a conjugate transpose of $\mS$.
\end{lem}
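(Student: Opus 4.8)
The plan is to establish the two assertions in turn: first that $\bV$ again has the plane-wave form \eqref{eq:all dofs} (so that it is determined by its two base DoF $(V_1,V_2)$), and then that the map $(U_1,U_2)^T \mapsto (V_1,V_2)^T$ is given by $\mS^*\mZ^f\mS$. For the first assertion I would use translation covariance. Since the mesh is uniform and $\mZ$ is assembled from identical local matrices $\mZ^f$, the global operator $\mZ$ commutes (away from the boundary) with the one-cell lattice-shift operators $T_x,T_y$ acting on the DoF. A vector $\bU$ of the form \eqref{eq:all dofs} is a simultaneous eigenvector of $T_x,T_y$ with eigenvalues $\e^{\i k_x\dx}$ and $\e^{\i k_y\dy}$ (up to the orientation convention for the shifts). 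Hence $\bV=\mZ\bU$ is an eigenvector with the same eigenvalues, and being such an eigenvector is exactly the statement that $\bV$ has the form \eqref{eq:all dofs}.

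For the second assertion I would write out the assembly explicitly. Writing $\bU_f = P^f\bU$ for the local gather of $\bU$ onto the four edges of a face $f$ and $\mZ = \sum_f (P^f)^T\mZ^f P^f$, the component of $\bV$ at an edge $e$ is $\sum_{f\ni e}\left(\mZ^f\bU_f\right)_{\ell_f(e)}$, where $\ell_f(e)$ is the local index of $e$ within $f$. Fix a reference cell $f_0$ with its four edges labelled $e_1,\dots,e_4$ (bottom, right, top, left) in the ordering underlying \eqref{eq:def:S}, and compute $V_1$ and $V_2$. On a rectangular grid each of $e_1,e_2$ is shared by $f_0$ and exactly one neighbor: $e_1$ is the top edge (local index $3$) of the cell below, and $e_2$ is the left edge (local index $4$) of the cell to the right. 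By \eqref{eq:def:S} we have $\bU_{f_0} = \mS(U_1,U_2)^T$, and by the plane-wave covariance just established the local DoF of a neighbor obtained by the lattice shift $\mb R$ equal $\e^{\i(\bk\cdot\mb R)}\mS(U_1,U_2)^T$, with $\mb R=(0,-\dy)$ for the south neighbor and $\mb R=(\dx,0)$ for the east neighbor.

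Setting $w := \mZ^f\mS(U_1,U_2)^T$ and collecting the two contributions to each base DoF then gives
\[
  V_1 = w_1 + \e^{-\i k_y\dy}\,w_3,
  \qquad
  V_2 = w_2 + \e^{\i k_x\dx}\,w_4.
\]
Read as a single $2\times 4$ matrix acting on $w$, these two rows are $(1,0,\e^{-\i k_y\dy},0)$ and $(0,1,0,\e^{\i k_x\dx})$, which is precisely $\mS^*$; hence $(V_1,V_2)^T = \mS^* w = (\mS^*\mZ^f\mS)(U_1,U_2)^T$, as claimed. The step I expect to be the main obstacle is exactly this last bookkeeping: one must track which faces share $e_1,e_2$ and under which local indices they contribute, and then verify that the phase factors coming from the neighbor shifts are the complex conjugates of the corresponding entries of $\mS$ — this conjugation is what turns the assembled gather matrix into $\mS^*$ rather than $\mS^T$, and is the reason the conjugate transpose appears in \eqref{eq:linear relation u12 v12}.
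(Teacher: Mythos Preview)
Your proposal is correct and follows essentially the same route as the paper's proof: both compute $V_1$ and $V_2$ by summing the contributions from the two cells sharing each base edge, express those contributions via the local matrix $\mZ^f$ acting on phase-shifted plane-wave DoF, and then recognize the resulting $2\times4$ gather matrix as $\mS^*$. Your justification of the first assertion via commutation with the lattice shifts $T_x,T_y$ is slightly more explicit than the paper's (which simply says it ``follows immediately''), but the substance is identical.
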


\begin{proof}
  The fact that $\bV$ satisfies \eqref{eq:all dofs} follows immediately from
  the fact that $\bU$ satisfies \eqref{eq:all dofs}.
  The linear relation between $(V_1,V_2)^T$ and $(U_1,U_2)^T$
  is a direct consequence of the linear relation \eqref{eq:simple multiplication}.
  The main point of the lemma is to show that the linear relation
  \eqref{eq:linear relation u12 v12} is given by the two-by-two matrix
  $(\mS^* \mZ^f \mS)$.


  Consider the two elements $f_1$ and $f_2$ that determine the value of $V_1$ and
  the two elements $f_1$ and $f_3$ that determine the values of $V_2$,
  as shown in Figure.~\ref{fig:l2g_neumann}.
  The value of $V_1$ is a sum of the contributions from the elements
  $f_1$ and $f_2$
  \begin{equation}\label{eq:v1}
  \begin{split}
    V_1
    &=
    \left[
      \begin{array}{cccc}
        1 & 0 & 0 & 0 \\
      \end{array}
    \right]
    \mZ^f \mS
    \left[
      \begin{array}{c}
        U_1 \\
        U_2 \\
      \end{array}
    \right]
    +
    \left[
      \begin{array}{cccc}
        0 & 0 & 1 & 0 \\
      \end{array}
    \right]
    \mZ^f \mS
    \left[
      \begin{array}{c}
        e^{-\i k_2\dy}U_1 \\
        e^{-\i k_2\dy}U_2 \\
      \end{array}
    \right]
    =\\ &=
    \left[
      \begin{array}{cccc}
        1 & 0 & e^{-\i k_2\dy} & 0 \\
      \end{array}
    \right]
    \mZ^f \mS
    \left[
      \begin{array}{c}
        U_1 \\
        U_2 \\
      \end{array}
    \right].
  \end{split}
  \end{equation}
  Here the product $\mZ^f \mS (U_1,U_2)^T$ is a size four vector of contributions from the element
  to the four DoF of $\bV$, while multiplication on the left by $(1,0,0,0)$ extracts the first component of this vector.

  Similarly, to \eqref{eq:v1} we get the expression for $V_2$
  \begin{equation}\label{eq:v2}
  \begin{split}
    V_2
    &=
    \left[
      \begin{array}{cccc}
        0 & 1 & 0 & 0 \\
      \end{array}
    \right]
    \mZ^f \mS
    \left[
      \begin{array}{c}
        U_1 \\
        U_2 \\
      \end{array}
    \right]
    +
    \left[
      \begin{array}{cccc}
        0 & 0 & 0 & 1 \\
      \end{array}
    \right]
    \mZ^f \mS
    \left[
      \begin{array}{c}
        e^{-\i k_1\dx}U_1 \\
        e^{-\i k_1\dx}U_2 \\
      \end{array}
    \right]
    =\\ &=
    \left[
      \begin{array}{cccc}
        0 & 1 & 0 & e^{-\i k_1\dx} \\
      \end{array}
    \right]
    \mZ^f \mS
    \left[
      \begin{array}{c}
        U_1 \\
        U_2 \\
      \end{array}
    \right].
  \end{split}
  \end{equation}
  Combining \eqref{eq:v1} and \eqref{eq:v2} and recalling the definition
  \eqref{eq:def:S}
  of the transformation matrix $\mS$ we obtain the final result
  \eqref{eq:linear relation u12 v12}.
\end{proof}

\begin{figure}[th!]
    \centering
    \includegraphics[width=.65\textwidth]{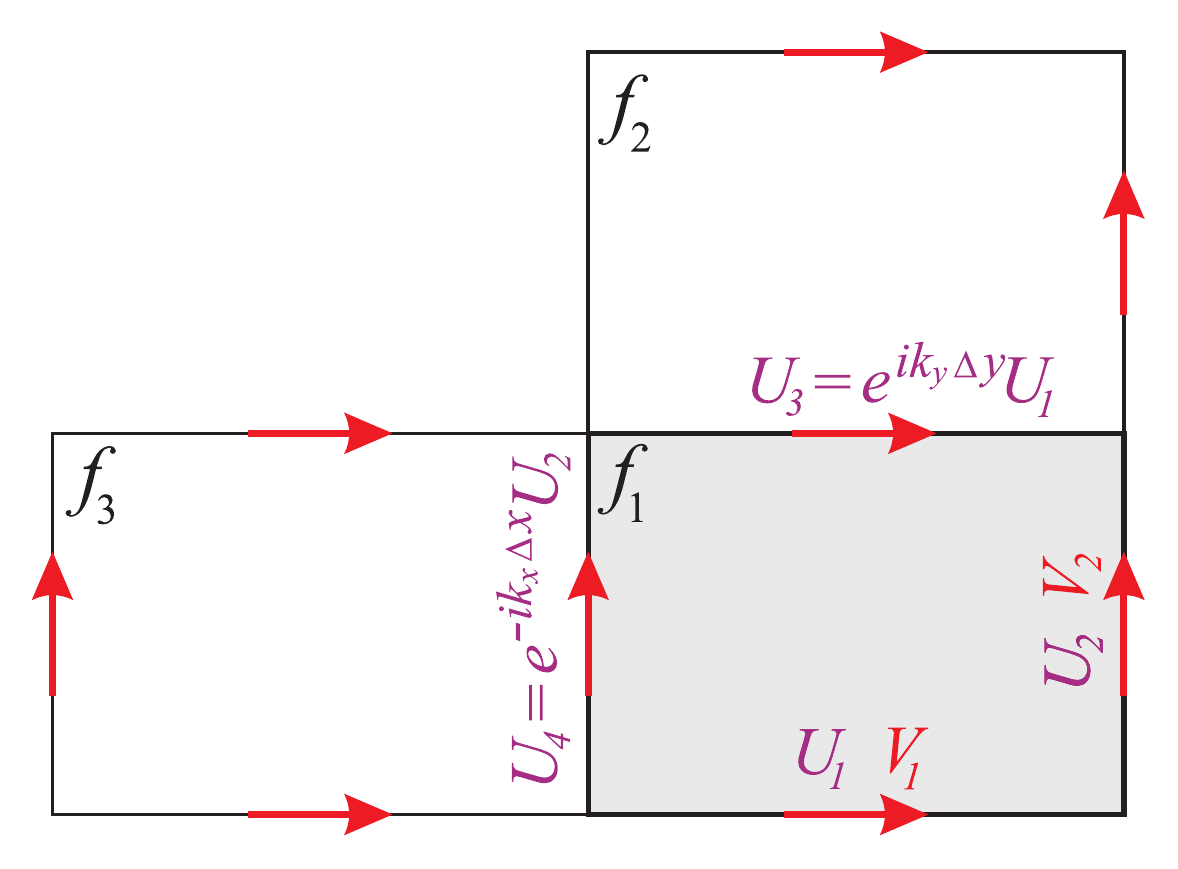}
    \caption{Three cells used to assemble the contributions after multiplication by a uniform matrix.}
    \label{fig:l2g_neumann}
\end{figure}

For a rectangular mesh the two discretization parameters are $h=\dx$, which without loss of generality we assume to be the smallest of $\dx$ and $\dy$,
and the aspect ratio of the elements
\begin{align}
    \gamma = \frac{\dy}{\dx}.
\end{align}

The characterization of waves by two DoF \eqref{eq:all dofs} and the wave vector $\mb k$
together with Lemma~\ref{eq:single matrix multiplication}
suggests rewriting the first equation in \eqref{eq:disc space cont time},
only in terms of two DoF associated to two orthogonal edges $e_1$ and $e_2$, as
\begin{align}\label{eq:two DoF evolution}
    \ods t \bE_{(e_1,e_2)} + \frac{1}{\ep_0}\od t \bJ_{(e_1,e_2)}
    &=
    -c_0^2\overline{\mW}_\fs E \overline{\mA}_h \bE_{(e_1,e_2)}.
\end{align}
In the above we use the DoF notation introduced in \eqref{eq:edof}, with $\bU_{(e_1,e_2)}: = (U_{e_1},U_{e_2})^T$, for $\bU =\bE$ or $\bU = \bJ$.
Here $\overline{\mW}_\cE$ and $\overline{\mA}_h$ are $2\times 2$
matrices defined as
\begin{align}
    \overline{\mW}_\cE =\mS^\star \mW_{\cE,f}\mS
    \qquad \text{and} \qquad
    \overline{\mA}_h=\mS^\star\mA_{h,f}\mS.
\end{align}

Performing dispersion analysis on the semi-discrete system \eqref{eq:two DoF evolution}
yields a $2\times 2$ eigenvalue problem.
The matrix $\mA_h$ has rank one, thus the product
$\overline{\mW}_\fs E\overline{\mA}_h$ is at most rank one.
This implies that one of two eigenvalues is zero.
This eigenvalue corresponds to evanescent waves,
which are not of interest as they do not propagate in space.
We focus on the non-zero eigenvalue of the matrix $\overline{\mW}_\fs E\overline{\mA}_h$.
The non-zero eigenvalue corresponds to transient waves.
Since one of the eigenvalues of this matrix is zero,
the symbol is given by the trace of the matrix as
\begin{equation}\label{eq:Fourier symbol}
\begin{split}
    \FS_h(\textbf{k})
    &=
    -c_0^2\text{Tr}(\overline{\mW}_\fs E\overline{\mA}_h)\\
    &=
    -\frac{4c_0^2}{h^2}\sin^2\frac{k_x h}{2}\left(1+(1-4w_3)\sin^2\frac{k_x h}{2}\right)\\
    &
    -\frac{32c_0^2}{\gamma h^2}w_2\sin^2\frac{k_x h}{2}\sin^2\frac{k_y \gamma h}{2}\\
    &
    -\frac{4c_0^2}{\gamma^2h^2}\sin^2\frac{k_y\gamma h}{2}\left(1+(1-4w_1)\sin^2\frac{k_y\gamma h}{2}\right).
\end{split}
\end{equation}

The difference between the continuous and the discrete symbols,
in the case of exact time integration,
defines the order of numerical dispersion.
Therefore, we are interested in making this difference, $\FS(\bk)-\FS_h(\bk)$,
as small as possible.
For a general member of the MFD family this difference is second order in $h$.
This can be seen by taking $\bk=k(\cos\theta,\sin\theta)^T$
and expanding
$\FS_h(\textbf{k})$ in a Taylor series in $h$ as
\begin{equation}\label{eq:Fourier symbol 2}
\begin{split}
    \FS_h(\textbf{k})
    =
    -(c_0 k)^2
    \Big\{1
    +\Big(\tfrac{(3w_3-1)}{3}\cos^4\theta
    &+2 \gamma w_2\cos^2\theta\sin^2\theta+ \\
    &
    +\tfrac{\gamma^2(3w_1-1)}{3}\sin^4\theta\Big) k^2 h^2
    +\mc O(h^4)\Big\}.
\end{split}
\end{equation}
We observe that we can eliminate the angular ($\theta$) dependence of the $h^2$ term
in \eqref{eq:Fourier symbol 2} through the following choice of free parameters
in the MFD scheme
\begin{align}
    \tfrac{\gamma^2(3w_1-1)}{3}
    =
    \gamma w_2
    =
    \tfrac{(3w_3-1)}{3},
    \quad \text{e.g. by taking}\qquad
    \left\{
    \begin{array}{l}
      w_1 = \frac{3 w_2 \gamma^{-1}\ \ \ +1}{3},
      \\
      w_3 = \frac{3 w_2 \gamma+1}{3}.
    \end{array}
    \right.\label{eq:paramsel1}
\end{align}
%
This yields the following discrete symbol
\begin{align}
    \FS_h(\textbf{k})
    =
    -(c_0 k)^2
    \left\{
    1+ \gamma w_2 k^2 h^2+\mc O(h^4)\right\}.
\end{align}
Taking $w_2=0$ we can eliminate the second order difference between
the continuous and discrete symbols, thus making the
numerical dispersion fourth order accurate.
Then by using a fourth order discretization in time,
we could arrive at a method with fourth order dispersion error.
This approach has its advantages especially at low spatial resolution.
Unfortunately, the storage necessary for these schemes may be prohibitive for large problems.
We will instead focus on choosing a second order time integrator so that $w_2$ can eliminate all second order dispersion errors.

\section{Exponential Time Differencing}
\label{timediscrete}
Exponential differencing is a time integration technique commonly used for lossy dielectrics.
The idea is as follows.
Consider a time dependent ODE of the form
\begin{align}\label{eq:scalar ODE}
    \dot u = c u + F(u,t).
\end{align}
To create the exact solution of \eqref{eq:scalar ODE} one exploits the fact that
\begin{equation}\label{eq:dif of product}
    \frac{d}{dt}\left(e^{-ct} u\right)
    =
    e^{-ct}\left(\dot u - c u\right).
\end{equation}
Multiply \eqref{eq:scalar ODE}
by $e^{-ct}$, integrate from $t_n$ to $t_{n+1}$ and divide by $e^{-ct_{n+1}}$
\begin{align}
    u(t_{n+1})-\e^{c\Delta t} u(t_n)
    &=
    \int_{t_n}^{t_{n+1}}
    F(u(s),s)\e^{c(\dt-s)}\ ds.
    \label{eq:exact ETD}
\end{align}
Formula \eqref{eq:exact ETD} is exact.
Thus, in principle, higher order accuracy can be obtained by using higher order discretization of the integral term, c.f. \cite{cox2002exponential}.
In practice,
due to specifics of definitions of $F(u(s),s)$, it could be convenient to use the following approximation to the integral in \eqref{eq:exact ETD}
\begin{align}
    \int_{t_n}^{t_{n+1}} F(u(s),s)\e^{c(\dt-s)}\ ds
    \approx
    F^{n+1/2} \int_{t_n}^{t_{n+1}} \e^{c(\dt-s)}\ ds
    =
    c^{-1}(\e^{c\dt}-1)F^{n+1/2},
\end{align}
where $F^{n+1/2}$ is an approximation of $F(u(t_{n+1/2}),t_{n+1/2})$.
When $F$ depends on $u$ then this approach may be implicit.
However, in our case $F$ depends only on $t$ as we employ a staggering technique so
this quantity is computed explicitly.
This approach can be generalized to vector valued ODE in time
\begin{align}\label{eq:vector ODE}
    \dot{\mb u}=\mX\mb u +\mb F(\mb u,t)
\end{align}
to produce the following discretization when $\mX$ is invertible
\begin{align}\label{eq:ETD practice}
    \mb u^{n+1}-\e^{\mX\dt} \mb u^n
    =
    \mX^{-1}(\e^{\mX \dt}-\mathbb I) \mb F^{n+1/2}.
\end{align}

\subsection{Continuous in Space Discrete in Time Formulation}

Rewrite the first order PDEs (\ref{eq:Maxwell equations}-\eqref{eq:evolutionJ}) as
\begin{align}
    \left\{
    \begin{array}{ll}
        \dot{ \mb u} = \mX \mb u+\mb F,\\
        \dot{ B} = - \scurl \mb E,
    \end{array}
\right.\label{eq:foteqs}
\end{align}
where
\begin{align}\label{eq:system pre dt}
    \mb u
    =
    \begin{pmatrix}
    \mb E\\
    \mb J
    \end{pmatrix},
    \qquad
    \mX
    =
    \begin{pmatrix}
    0 & \ds-\frac{1}{\ep_0}\\
    \ep_0\w_P^2 & -\w_i
    \end{pmatrix},
    \qquad
    \mb F
    =
    \begin{pmatrix}
    c_0^2 \ \vcurl B \\
    0
    \end{pmatrix}.
\end{align}
We will consider a time discretization
$\mb E^n$ and $\mb J^n$
of
$\mb E$ and $\mb J$ on integer time steps $t_n:=n\dt$
and $B^{n+1/2}$ of $B$
on staggered half-integer time steps $t_{n+1/2}:=(n+1/2)\dt$.

For the first equation in \eqref{eq:system pre dt} we use the ETD scheme \eqref{eq:ETD practice},
where we define the matrix
\begin{equation}
    \mathbb Y := \mX^{-1}(\e^{\mX\dt}-\mathbb I).
\end{equation}
For the second equation we use the standard time-staggered leap-frog.
Thus, the semi-discrete scheme for \eqref{eq:system pre dt} reads
\begin{align}\label{eq:first order semi}
\left\{\begin{array}{ll}
    \ds
    \begin{pmatrix}
      \mb E^{n+1} \\
      \mb J^{n+1}
    \end{pmatrix}
    =
    \e^{\mX\dt}
    \begin{pmatrix}
      \mb E^n\\
      \mb J^n
    \end{pmatrix}
    +
    \mY
    \begin{pmatrix}
    c_0^2 \ \vcurl \ B^{n+1/2}\\
    0
    \end{pmatrix},\\
    \ds B^{n+1/2} = B^{n-1/2} -\dt \ \scurl \ \mb E^{n}.
\end{array}
\right.
\end{align}

In order to obtain from \eqref{eq:first order semi} an appropriate second order formulation
we proceed similarly to how we obtained
\eqref{eq:MECP} from (\ref{eq:Maxwell equations}-\eqref{eq:evolutionJ}) by eliminating
the magnetic induction $B$ from the evolution equation.
We do this by applying a leap-frog step to both sides of the first equation
in \eqref{eq:first order semi}.
This yields a continuous in space discrete in time discretization
\begin{equation}\label{temp1}
    \begin{pmatrix}
    \mb E^{n+1}\\
    \mb J^{n+1}
    \end{pmatrix}
    =
    (\mathbb I + \\e^{\mX\dt})
    \begin{pmatrix}
    \mb E^n\\
    \mb J^n
    \end{pmatrix}
    -\e^{\mX\dt}
    \begin{pmatrix}
    \mb E^{n-1}\\
    \mb J^{n-1}
    \end{pmatrix}
    -
    c_0^2\dt\mathbb Y
    \begin{pmatrix}
    \vcurl\ \scurl\ \mb E^n\\
    0
    \end{pmatrix}.
\end{equation}

\subsection{Dispersion Analysis for Continuous in Space Discrete in Time Formulation}
To obtain a discrete in time dispersion relation we divide both sides of \eqref{temp1} by the exponential integrator $\mathbb Y$ to get
\begin{equation}
\hspace*{-12pt}\frac{1}{\dt}\mathbb Y^{-1}\left[\begin{pmatrix}\mb E^{n+1}\\\mb J^{n+1}\end{pmatrix}-(\mathbb I+e^{\mX\dt})\begin{pmatrix}\mb E^{n}\\\mb J^{n}
\end{pmatrix}+\e^{\mX \dt}\begin{pmatrix}\mb E^{n-1}\\\mb J^{n-1}\end{pmatrix}\right] =
\begin{pmatrix}-c_0^2\ \vcurl \ \scurl \ & 0\\ 0 & 0\end{pmatrix}\begin{pmatrix}\mb E^{n}\\\mb J^n\end{pmatrix}.
\end{equation}
Assuming time-harmonic solutions in the above equation we produce the system
\begin{align}
\mathbb Y^{-1}\frac{\e^{-\i\w\dt}\mathbb I-(\mathbb I+\e^{\mX \dt}) +\e^{\i\w\dt}\e^{\mX \dt} }{\dt}= \begin{pmatrix}-c_0^2\ \vcurl \ \scurl \  & 0\\ 0 & 0\end{pmatrix}.\label{eq:time_disc_disp}
\end{align}
Defining the discrete symbol in time to be
\begin{align}
\mc \FT_{\dt}(\w) = \mathbb Y^{-1}\frac{\e^{-\i\w\dt}\mathbb I-(\mathbb I+\e^{\mX \dt}) +\e^{\i\w\dt}\e^{\mX \dt} }{\dt},
\end{align}
and expanding $\FT_{\dt}$ in a Taylor Series in the variable $\dt$ we obtain
\begin{align}
\mc \FT_{\dt}(\w)=(-\w^2\mathbb I+\i\w\mX)+\frac{\dt^2}{12}(-\w^2\mathbb I+\i\w\mX)^2+\mc O(\dt^4).
\end{align}
We will make use of this expansion in our method optimization proceedure.

\section{Exponential Time Mimetic Finite Difference Method (ETMFD) for Cold Plasma}
\label{fulldiscrete}
We now present the fully discrete \textbf{Exponential Time Mimetic Finte Difference (ETMFD)} method for the Maxwell-CP model, based on MFD in space and ETD in time. Our fully discrete problem is:

\noindent Given $\mb E_h^\ell, \mb J_h^\ell\in \cE_h$ for $\ell \in \{0,1\}$, find $\mb E_h^n, \mb J_h^n\in \cE_h \quad \forall n\geq 0$ such that
\begin{equation}
\ds\begin{pmatrix}\mb E^{n+1}_h\\ \mb J^{n+1}_h\end{pmatrix}=(\mathbb I+\e^{\mathbb X\dt})\begin{pmatrix}\mb E_h^{n}\\ \mb J_h^n\end{pmatrix}-\e^{\mathbb X\dt}\begin{pmatrix}\mb E_h^{n-1}\\ \mb J^{n-1}_h\end{pmatrix} -c_0^2\dt\mathbb Y\begin{pmatrix}\mW_\cE\scurl_h^T\mA_h \mb E^n\\ 0\end{pmatrix}.\label{eq:ETMFD}
\end{equation}

\subsection{M-Adaptation of the ETMFD}
\label{madapt}
To perform m-adaptation for the ETMFD we must first find its discrete dispersion relation.
Intuitively the dispersion relation for \eqref{eq:ETMFD}
would be determined by equality between the space discrete symbol $\FS_h$ and the time discrete symbol $\FT_{\dt}$.
However, the temporal symbol as defined is matrix valued while the spatial symbol is scalar.
Consider our space discretization of the first row of \eqref{eq:MECP} assumuing $\mb E_h$ and $\mb J_h$ are appropriate
transient plane waves as discussed in \eqref{eq:Fourier symbol}
\begin{align}
\ods t \mb E_{(e_1,e_2)} + \frac{1}{\ep_0}\od t \mb J_{(e_1,e_2)} = \FS_h(\mb k_h) \mb E_{(e_1,e_2)},
\end{align}
This evolution equation implies that
$\mb E_{(e_1,e_2)}$ and $\mb J_{(e_1,e_2)}$ must be colinear as $\FS_h$ is scalar.
We therefore define the quantities
\begin{align}
E_0=|\bE_{(e_1,e_2)}|, \qquad J_0=|\bE_{(e_1,e_2)}|.
\end{align}
Thus,  the spatial symbol of the ETMFD must be some 2x2 matrix multiplied by $\FS_h(\mb k)$ acting on the vector $(E_0,J_0)^T$. However as $\FS_h(\mb k)$ does
not depend on $J_0$ and has no influence on the second row of the system \eqref{eq:time_disc_disp} so this matrix must be
\begin{align}
\mP_1=\begin{pmatrix}1&0\\ 0 & 0\end{pmatrix}.
\end{align}
Given that $\FS_h\mP_1$ is matrix valued we can now pose the discrete dispersion relation as a $2\times 2$
eigenvalue problem on the initial orientations of the fields $\mb E_0$ and $\mb J_0$,
which must be non-zero eigenvectors of the matrix $\overline{\mathbb W}_\cE \ \overline{\mathbb A}_h$.
The discrete dispersion relation for \eqref{eq:ETMFD} is then given by
\begin{align}
\FT_{\dt}(\w) \begin{pmatrix}E_0\\ J_0\end{pmatrix} = \FS_h(\mb k)\mP_1 \begin{pmatrix}E_0\\J_0\end{pmatrix}.\label{eq:etmfddisprel}
\end{align}
To perform m-adaptation we begin by choosing $w_1,w_3$ as defined in \eqref{eq:paramsel1} which eliminates dependence on angle of propagation and leaves
us with one free parameter $w_2$, i.e.
\begin{align}
    w_1 = \frac{3 w_2\gamma^{-1}+1}{3},\qquad w_3=\frac{3w_2\gamma +1}{3}.
\end{align}
In order to relate time and space discretization sizes we introduce the Courant number
\begin{align}\label{eq:def:Courant}
    \nu=\frac{c_0\dt}{h}.
\end{align}
By moving both terms in \eqref{eq:etmfddisprel} to the left side and expanding
in a Taylor series in $h$ we get
\begin{equation}\label{eq:TS in h}
\begin{split}
    0&=\Big(\FT_\dt(\w)
    -\FS_h(\textbf{k})\mathbb P_1\Big)\begin{pmatrix}E_0\\  J_0\end{pmatrix}
    = \\ &\hspace*{-10pt}=
    \left(\left(-\w^2\mathbb I +\i\w\mathbb X+c_0^2k^2\mathbb P_1\right) +\frac{h^2}{12 c_0^2}\Big(\nu^2(-\w^2\mathbb I +\i\w\mathbb X)^2+12\gamma w_2 c_0^4k^4\mathbb P_1^2\Big)\right) \begin{pmatrix}E_0\\J_0\end{pmatrix}
    +\\ &+
    \mc O(h^4)\begin{pmatrix}E_0\\J_0\end{pmatrix}.
\end{split}
\end{equation}
As $(\w,\mb k)$ is a solution of the discrete dispersion relation we have
\begin{align}
    c_0^2 k^2 \mathbb P_1
    \begin{pmatrix}
    E_0\\
    J_0
    \end{pmatrix}
    =
    \left(\w^2\mathbb I-\i \w\mathbb X+\mc O(h^2)\right)
    \begin{pmatrix}
    E_0\\
    J_0
    \end{pmatrix}.
\end{align}
Substituting this
into the order $h^2$ term in \eqref{eq:TS in h} we have
\begin{align}
    \frac{h^2}{12c_0}\left(\nu^2+12\gamma w_2\right)c_0^4k^4\mathbb P^2+\mc O(h^4).
\end{align}
We can eliminate order $h^2$ term entirely by a proper choice of the parameter $w_2$
\begin{align}\label{eq:choice w2}
    \left(\nu^2+12\gamma w_2\right)=0
    \qquad \Rightarrow \qquad
    w_2 = -\frac{\nu^2}{12\gamma}.
\end{align}
Under the choice \eqref{eq:choice w2} the dispersion error is
\begin{align}
    \Big(\FT_{\dt}(\w)-\FS_h(\mb k) \mathbb P_1\Big)
    \begin{pmatrix}E_0\\J_0\end{pmatrix}
    =
    \left(-\w^2\mathbb I+\i\w\mathbb X + c_0^2 k^2 \mathbb P_1 +\mc O(h^4)\right)
    \begin{pmatrix}E_0\\ J_0\end{pmatrix}.
\end{align}
For convenience we explicitly write out the the choice of the optimal matrix
$\mathbb W_{\fs E,f}$
\begin{align}
    \mathbb W_{\fs E,f}
    =
    \frac{1}{12 \Delta x\Delta y}
    &
    \left(\begin{array}{rrrr}
    7-\nu_y^2 & -\nu_x\nu_y & \nu_y^2-1 & \nu_x\nu_y\\
    -\nu_x\nu_y & 7-\nu_x^2 & \nu_x\nu_y  & \nu_x^2-1\\
    \nu_y^2-1 & \nu_x\nu_y & 7-\nu_y^2 & -\nu_x\nu_y\\
    \nu_x\nu_y & \nu_x^2-1 & -\nu_x\nu_y & 7-\nu_x^2
    \end{array}\right),
    \qquad
    \begin{array}{l}
      \ds\nu_x=\frac{c_0\dt}{\dx},
      \\ \\
      \ds\nu_y=\frac{c_0\dt}{\dy}.
    \end{array}
\end{align}

\section{Numerical Simulations for Specific Media}
\label{num}
For our experiments we introduce a change of variables for $\mathbb X$ which allows for an easier formulation of the matrix exponential.
\begin{align}
\mathbb X = \begin{pmatrix}0 & -\ep_0^{-1} \\ \ep_0(\alpha^2+\beta^2) & 2\alpha\end{pmatrix}, \quad \alpha = -\frac{\w_i}{2}, \ \beta=\frac{\sqrt{4 \w_P^2-\w_i^2}}{2}.
\end{align}
The ODE system governing the cold plasma model is a classical damped, driven oscillator.
For different values of the parameters the character of the system changes.
We present results for the case when the system is under damped ($\w_i^2 < 4\w_p^2$).
The matrix exponential for $\mathbb X\dt$ is given by
\begin{align}
\e^{\mathbb X \dt}  =\e^{\alpha \dt}\begin{pmatrix}\cos(\beta \dt) -\displaystyle\alpha\frac{\sin(\beta \dt)}{\beta} & -\displaystyle\frac{\sin(\beta \dt)}{\ep_0\beta}\\
\ep_0(\alpha^2+\beta^2)\displaystyle\frac{\sin(\beta \dt)}{\beta} & \cos(\beta \dt) + \alpha \displaystyle\frac{\sin(\beta \dt)}{\beta}\end{pmatrix}
  :=\begin{pmatrix}\alpha_1 & \alpha_2 \\ \beta_2 & \beta_1\end{pmatrix}.
\end{align}
The integral of this matrix is given by
\begin{align}
\int_0^{\dt}\e^{\mathbb X s}\ ds=\begin{pmatrix}\alpha_3 & \alpha_4 \\ \beta_3 & \beta_4\end{pmatrix}
,
\end{align}
where the coefficients in the matrix above are defined as
\begin{align}
\alpha_3 &:= \frac{1}{\beta}\left(\displaystyle\frac{\e^{\alpha\dt}(2\alpha\beta\cos(\beta\dt)+(\beta^2-\alpha^2)\sin(\beta\dt))-2\alpha\beta}{\alpha^2+\beta^2}\right)
,\\
\alpha_4 &:=  \frac{1}{\beta}\left(\displaystyle-\frac{\beta-\e^{\alpha\dt}(\alpha\sin(\beta\dt)-\beta\cos(\beta\dt))}{\ep_0(\alpha^2+\beta^2)}\right),\\
\beta_3 &:= \frac{1}{\beta}\left(\ep_0(\beta+\e^{\alpha\dt}(\alpha\sin(\beta\dt)-\beta\cos(\beta\dt)))\right),\\
\beta_4 &:=  \frac{1}{\beta}\left(\e^{\alpha\dt}\sin(\beta\dt)\right).
\end{align}

The second order formulation for the discrete electric field $\mathbf{E}$ and polarization current density $\mathbf{J}$, as introduced in Section \ref{madapt},
was a convenient formulation of the discrete ETMFD method for the analysis of numerical dispersion. However, in our numerical experiments we have
found that $L^2$ errors in the second order system for $\mathbf{E}$ and $\mathbf{J}$ are very sensitive to the choice of initial conditions.
Thus, for our numerical simulations we will use a different formulation of the discrete ETMFD method, with an equivalent numerical dispersion relation,
that retains the second order discrete evolution equation for the electric field, but uses a first order discrete evolution equation for the polarization current density $\mathbf{J}$.
Since the focus of this paper is on numerical dispersion optimized methods, we do not investigate the appropriate initialization of the discrete ETMFD scheme here.
We defer this investigation to future work.

The hybrid second order evolution equation for the discrete electric field $\mathbf{E}$ and first order evolution equation for the polarization density $\mathbf{J}$ is given as
\begin{align}
&\mb E^{n+1}_h = (1+\alpha_1)\mb E^n_h + \alpha_2\mb J^n_h - \alpha_1\mb E_h^{n-1} -\alpha_2\mb J_h^{n-1} -c_0^2\dt\alpha_3\mW_\cE \mA_h\mb E_h^n & n\geq 2,\\
&\mb J^{n+1}_h = \beta_1\mb J^n_h +\beta_2\mb E^n_h +\frac{\beta_3}{\alpha_3}(\mb E_h^{n+1}-\alpha_1\mb E_h^n-\alpha_2\mb J_h^n) &n\geq 1.
\end{align}
This formulation is explicit when we compute $\mb E^{n+1}$ before $\mb J^{n+1}$. It requires three initial conditions given by
\begin{align}
&\mb E^0_h = \mc I^{\cE_h}(\mb E(0)),\qquad \mb E^1_h=\mc I^{\cE_h}(\mb E(\dt)), \qquad \mb J^0_h=\mc I^{\cE_h}(\mb J(0)).
\end{align}
In our numerical simulations we used a midpoint quadrature on every edge for $\mb E_h$ and computed $\mb J_h$ exactly; i.e.,
\begin{align}
\mb E_{h|e}^j = \boldsymbol\tau_e \cdot \mb E(x_c,y_c,j\dt), \ \ j=\{0,1\}, \qquad \mb J_{h|e}^0 = \frac{1}{|e|}\int_e \mb J(x,y,0)\cdot\boldsymbol\tau_e ds.\label{eq:icquad}
\end{align}
\begin{exper}
In our first experiment we investigate the numerical anisotropy of our method.
If $(\w_n,\mb k_n)$ are solutions of the numerical dispersion relation, then as eigenvalue pairs they satisfy the relation
\begin{equation}
\label{eq:numdisp}
\text{det}\left(\FT_{\dt}(\w_n)-\FS_h(\mb k_n)\mP_1\right)=0.
\end{equation}
 The continuous dispersion relation between $\omega$ and $\textbf{k}$ can be written as
\begin{equation}
-\i\w^3+\w_i\w^2-\i(\w_p^2+c_0^2k^2)\w-\w_i c_0^2k^2=0\label{eq:cipexdisp}.
\end{equation}
Assume that $\mb k$ is fixed and real valued. Let $(\w,\mb k)$ be a solution to \eqref{eq:cipexdisp}. We define the relative dispersion error as
\begin{equation}
\mc E(\w) = \frac{1}{|\w|}\text{det}\left(\FT_{\dt}(\w)-\FS_h(\mb k)\mP_1\right )
\end{equation}
which is analogous to a local truncation error, i.e., we measure how close the true root $\omega$ of the continuous dispersion relation \eqref{eq:cipexdisp} is to being a root of the numerical dispersion relation \eqref{eq:numdisp}.
We parameterize the wave vector as $\mb k =4(\cos\theta,\sin\theta)$.
For this experiment we will also choose $\w_i=1$ and $\w_p=1$. In Figure \ref{fig:aniso} we plot $\mc E(\w)$ as a polar function of $\theta$ on a log scale.
A perfect circe in this diagram indicates isotropic error, otherwise the error shows the directional dependence of dispersion.
For the purpose of comparison we also include the relative dispersion error of ETD in time and Yee like staggering in space, which we refer to as the ET-Yee scheme.
\begin{figure}[th!]
\centering
\begin{tabular}{cc}
\includegraphics[width=.5\textwidth]{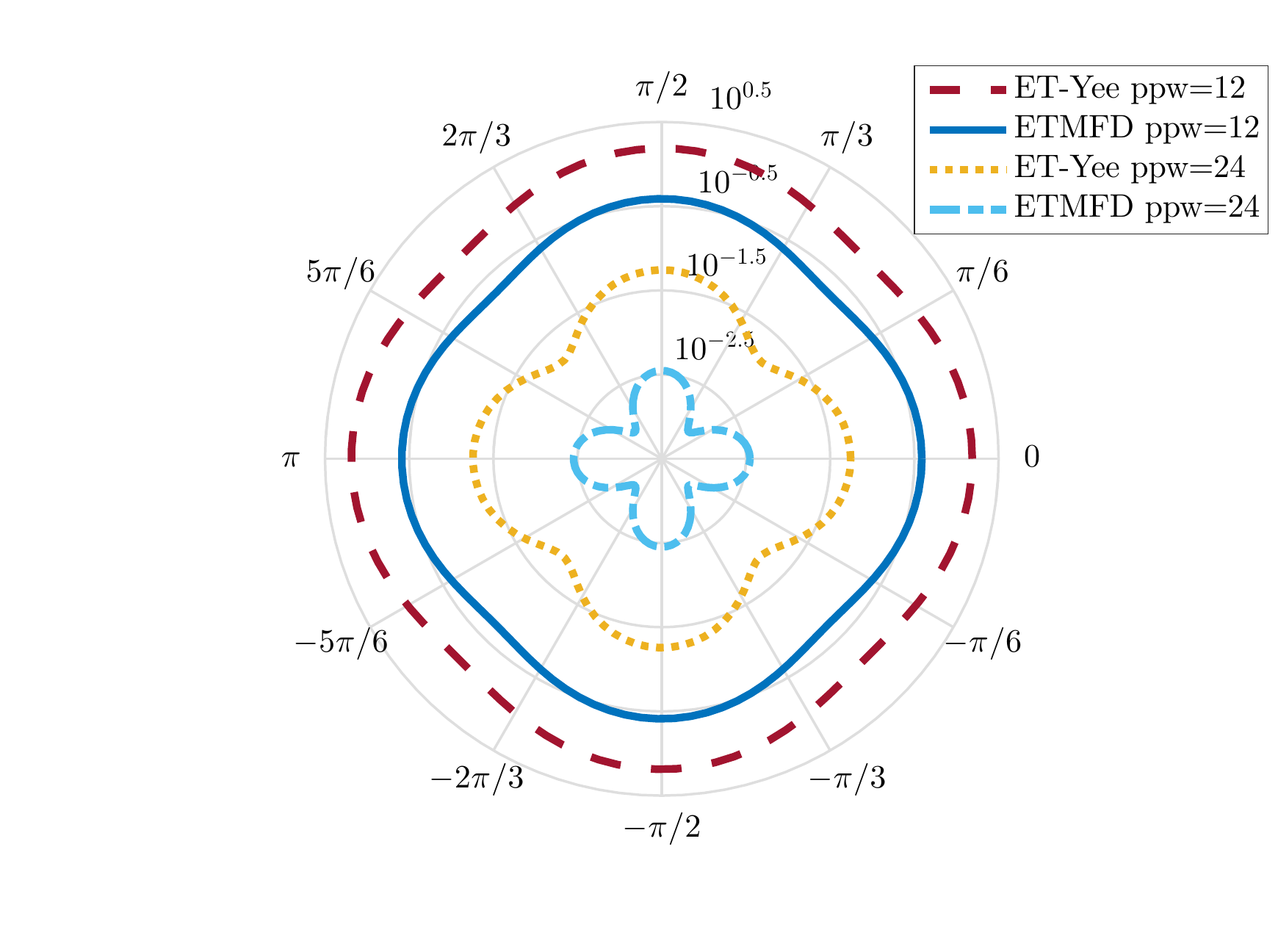} &\includegraphics[width=.5\textwidth]{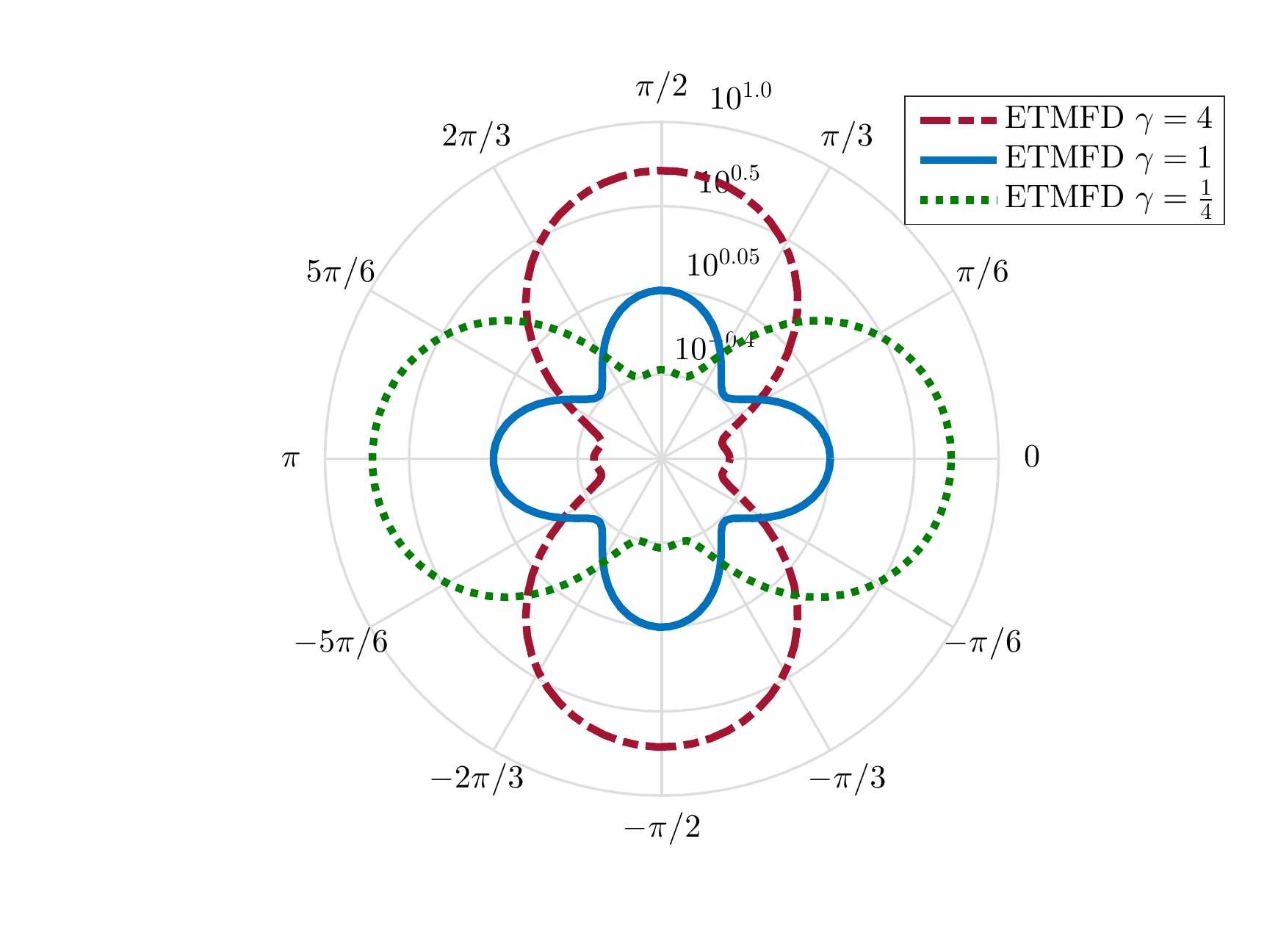}\\
(a) & (b)
\end{tabular}
\caption{We consider a cold isotropic plasma with $\w_P=1$ and $\w_i=1$. Figure (a) plots relative dispersion error for a wave with $k=4$
 and resolved at 12 and 24 points per wavelength on a mesh with an aspect ratio $\gamma=1$ for both the ET-Yee and ETMFD schemes. We choose the Courant number to be $\nu=\frac{1}{2}$.  Figure (b) plots the relative dispersion error for a cell with $\dx\dy=12^{-2}$
 for the aspect ratios $\gamma=4,1,\frac{1}{4}$. In this case we choose the Courant number to be $\nu =\frac{1}{2}\min\{\gamma^3,1\}$.}\label{fig:aniso}
\end{figure}
Figure \ref{fig:aniso}  illustrates that while the dispersion error of our method is anisotropic it is significantly reduced compared to that of the ET-Yee scheme.
By varying the aspect ratio we find that we can reduce dispersion error in the direction of increased refinement at the expense of increased dispersion error in the less refined direction.
\end{exper}

\begin{exper}
In our second experiment we will investigate the accuracy of our ETMFD method for discretizing problems with a known exact solution.
For $\mb k=(k_x,k_y)^T$ with $k_x,k_y\in\pi\mathbb Z$, let $a+\i b=\w$ be a (complex) root of the dispersion relation \eqref{eq:cipexdisp}.
We consider the exact solution for the Maxwell-CP model given by
\begin{align}
\mb E(x,y,t) &= \e^{a t}\cos(b t)\begin{pmatrix}-k_y \cos(k_x x)\sin(k_y y)\\ k_x \sin(k_x x)\cos(k_y y)\end{pmatrix},\\
\mb J(x,y,t) &= \ep_0\w_p^2\e^{a t} \frac{(a+\w_i)\cos(b t) + b\sin(b t)}{b^2+(a+\w_i)^2}\begin{pmatrix}-k_y \cos(k_x x)\sin(k_y y)\\ k_x \sin(k_x x)\cos(k_y y)\end{pmatrix}.
\end{align}
For our experiments we consider $\w_P=\w_i=\ep_0=c=1$ and $k_x=k_y=\pi$. For this we have $a\approx0.023$ and $b\approx 4.55$.
We choose the final time to be $T=4$. To calculate relative $L^2$ errors we use an appropriate inner product, based on our mimetic discretization, which is defined as
\begin{align}
\mc E^h_{L^2}(\mb F_h^n) := \frac{\sqrt{(\mb F_h^n - \cI^{\cE_h}(\mb F(n\dt))^T\mM_{\cE} (\mb F_h^n - \cI^{\cE_h}(\mb F(n\dt)) }}{\sqrt{\cI^{\cE_h}(\mb F(n\dt))^T\mathbb M_\fs E \cI^{\cE_h}(\mb F(n\dt))}},
\end{align}
where $\mb F_h^n=(\mb E_h^n,\mb J_h^n)^T$ and the interpolation $\mc I^{\fs E_h}$ operator is defined in \eqref{eq:edof}.

To define the dispersion error we fit an appropriate temporal function, $F(t:\w_h)$, to temporal grid data $\{\mb E_{h,e_i}^n\}_{n=0}^N$ at some edge $e_i$
to find the best discrete frequency $w_h$. To calculate the relative dispersion errors, we perform the following procedure. If $(a_h,b_h)$ is the result of
the non-linear least squares fitting of time tracking data $\{\mb F^n_{h|e}\}_{n=1}^N$ to the appropriate function $(\exp(a_h t)\cos(b_h t)$ for the electric field and
$\ep_0\w_p^2\e^{a_h t} \frac{(a_h+\w_i)\cos(b_h t) + b_h\sin(b_h t)}{b_h^2+(a_h+\w_i)^2}$ for the current density) then we define the relative dispersion error by
\begin{align}
\mc E_d^h(\mb F_h) := \sqrt{\frac{(a-a_h)^2+(b-b_h)^2}{a^2+b^2}}
\end{align}
where $a,b$ are the true data. For comparison, we have also performed our simulations with the corresponding ET-Yee scheme (i.e., Yee spatial staggering with ETD),
which is second order accurate in space and time. In Table \ref{tab:l2r} we present relative $L^2$ errors in the electric field and polarization density,
while in Table \ref{tab:dr} we present relative dispersion errors for the electric field and polarization density, respectively.
Figures \ref{fig:l2rs}, and \ref{fig:drs} plot the results of Tables \ref{tab:l2r}-\ref{tab:dr}. Our results indicate fourth order dispersion and $L^2$ error
convergence for the ETMFD as compared to the corresponding (well known) second order convergence for the ET-Yee scheme.
\begin{figure}[th!]
\centering
\includegraphics[width=\textwidth]{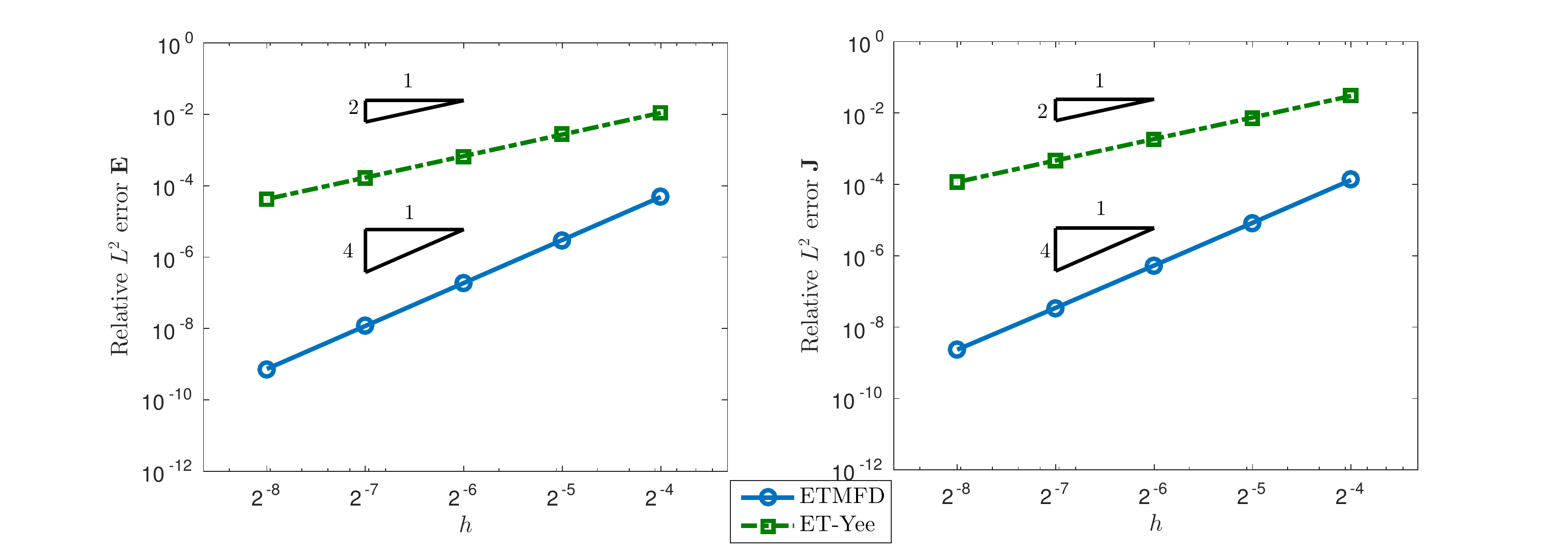}
\caption{Relative $L^2$ errors for Experiment 2.}\label{fig:l2rs}
\includegraphics[width=\textwidth]{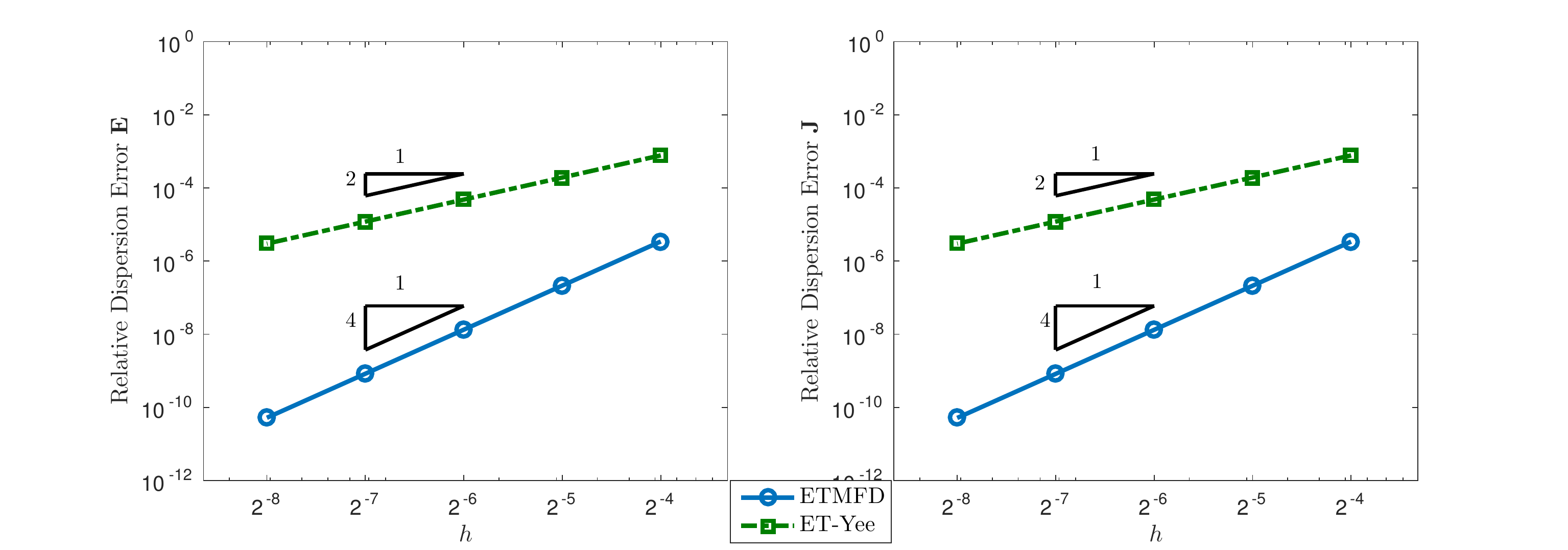}
\caption{Relative dispersion errors for Experiment 2.}\label{fig:drs}
\end{figure}

\begin{table}[th!]
\centering
\caption{Relative $L^2$ Errors for Experiment 2.}\label{tab:l2r}
{\small
\begin{tabular}{c|c|c|c|c|c|c|c|c|}
\cline{2-9}&\multicolumn{4}{|c|}{Electric Field, $\mb E$} &\multicolumn{4}{|c|}{Current Density, $\mb J$}\\
\hline
\multicolumn{1}{|c|}{$\log_2(h)$} & ET-Yee & rate &ETMFD &rate & ET-Yee & rate & ETMFD & rate\\
\hline
\hline
\multicolumn{1}{|c|}{-4}&1.1024e-02     &         &4.8495e-05    &      & 3.0064e-02   &          &1.3322e-04    &      \\
\multicolumn{1}{|c|}{-5}&2.7237e-03     &2.0170   &3.0206e-06    &4.0049& 7.4940e-03   & 2.0042   &8.3901e-06    &3.9890\\
\multicolumn{1}{|c|}{-6}&6.7826e-04     &2.0057   &1.8844e-07    &4.0026& 1.8704e-03   & 2.0024   &5.3485e-07    &3.9715\\
\multicolumn{1}{|c|}{-7}&1.6931e-04     &2.0021   &1.1767e-08    &4.0013& 4.6717e-04   & 2.0013   &3.4784e-08    &3.9426\\
\multicolumn{1}{|c|}{-8}&4.2303e-05     &2.0009   &7.3501e-10    &4.0008& 1.1674e-04   & 2.0007   &2.3361e-09    &3.8963\\
\hline
\end{tabular}}
\end{table}

\begin{table}[th!]
\centering
\caption{Relative Dispersion Errors for Experiment 2.}\label{tab:dr}
{\small
\begin{tabular}{c|c|c|c|c|c|c|c|c|}
\cline{2-9}&\multicolumn{4}{|c|}{Electric Field, $\mb E$} &\multicolumn{4}{|c|}{Current Density, $\mb J$}\\
\hline
\multicolumn{1}{|c|}{$\log_2(h)$} & ET-Yee & rate &ETMFD &rate & ET-Yee & rate & ETMFD & rate\\
\hline
\hline
\multicolumn{1}{|c|}{-4}&7.7638e-04 &        &3.4427e-06 &        &8.7152e-04  &         &3.4530e-06  & \\
\multicolumn{1}{|c|}{-5}&1.9280e-04 &2.0129  &2.1407e-07 &4.0080  &2.1720e-04  &2.0045   &2.1487e-07  &4.0063\\
\multicolumn{1}{|c|}{-6}&4.8070e-05 &2.0066  &1.3345e-08 &4.0042  &5.4246e-05  &2.0014   &1.3399e-08  &4.0032\\
\multicolumn{1}{|c|}{-7}&1.2002e-05 &2.0033  &8.3287e-10 &4.0021  &1.3557e-05  &2.0005   &8.3655e-10  &4.0016\\
\multicolumn{1}{|c|}{-8}&2.9985e-06 &2.0017  &5.1994e-11 &3.9892  &3.3886e-06  &2.0003   &5.2097e-11  &4.0052\\
\hline
\end{tabular}}
\end{table}

\end{exper}

\section{Conclusions}
\label{conclude}

We have constructed a new successful m-adaptation of Mimetic Finite Difference (MFD)
method for Maxwell's equations in a cold plasma.
We started from a second order edge based family of MFD discretizations in space.
We used a generalized form of mass lumping.
This was done, on one hand, to obtain a fully explicit scheme, thus avoiding linear solves at every time step.
As a result the new scheme is highly efficient.
On the other hand, the generalized form of mass lumping preserves free parameters in the MFD discretization.
This allows for further optimization within the family.

We demonstrated that using the standard leapfrog time stepping does not allow reduce the numerical dispersion within the MFD family.
Fortunately, using exponential time differencing allows for a successful m-adaptation of the MFD family.
For the optimal choice of parameters in the exponential time MFD (ET-MFD) discretization
the numerical dispersion errors were shown to be fourth order
as opposed to second order for a general member of the ET-MFD family.
Numerical simulations independently verified our theoretical results
showing fourth order numerical dispersion and $L^2$-errors for some special solutions.

One of the advantages of our m-adapted ET-MFD method over other fourth order methods, that have been constructed in the literature using the modified equation approach (see e.g. \cite{young4}),
is smaller stencil size as compared to those of other fourth order methods.
This is due to the low order base discredization.
Higher order approximation is a result m-adaptation procedure, which is possible due
to mesh regularity and symmetry.

The use of ETD offers a number of advantages in complex dispersive media.
First, it allows for an explicit staggering of the electric field and current density from the magnetic field.
This is in contrast to time averaging schemes which are semi-implicit with spatial staggering.
Though ETD may require smaller time steps than a semi-implicit approach, linear dispersive media such
as a cold isotropic plasma are stiff media requiring very small time steps in numerical discretizations in order to
capture the fast decaying transients in the media.
Thus a cheaper explicit scheme is preferable to a more expensive implicit scheme when run with comparable time steps.

Finally, our approach can be interpreted as a generalization (though non-trivial)
of m-adaptation in vacuum.
It inherits a similar structure for its discrete symbol in time and allows for successful optimization over numerical dispersion errors.
As a consequence, the optimal choice of free MFD parameters and the corresponding local mass matrices turn out to be the same in the case of vacuum and Maxwell-CP model.
In the future we will investigate the sensitivity of errors to initial conditions and
extend the ideas and techniques considered here to other types of linear and nonlinear dispersive materials.

Maxwell's equations include divergence constraints on the electric and magnetic flux densities.
It is well known that if solutions satisfy these divergence constraints initially then the curl
 equations guarantee that these conditions are satisfied at later times.
Thus, the divergence constraints are redundant in the continuum equations as long as they are satisfied by initial conditions.
 This property may or may not hold for solutions to discrete approximations of Maxwell's equations.
We are currently working on understanding the divergence properties of our ETMFD method along with a stability analysis of our schemes.
These and related issues will be part of a future paper that will address the construction of ETMFD schemes for a large class of linear dispersive models.

\section{Acknowledgments}

V.~Gyrya's work was carried out under the auspices of the National Nuclear Security Administration of the U.S.~Department of Energy at Los Alamos National Laboratory under Computing Research (ASCR) Program in Applied Mathematics Research.

D.~A.~McGregor's work is supported by the U.S. Department of Energy National Energy Technology Laboratory Graduate Professional Internship Program.

\bibliographystyle{plain}

\appendix
\section{M-Adaptation for leapfrog time stepping}
\label{appndx}
We demonstrate the need for exponential time differencing in the spatially discretized cold plasma model to produce a fully discrete method with
high order numerical dispersion by first considering the case of a simple conductive medium. A conductive medium is a special case of
CP where we take $\w_i\to\infty$ and assume the ratio $\ep\w_P^2/\w_i\to\sigma$ as $\w_i\to\infty$.
A conductive medium is modeled by the second order PDE
\begin{equation}
\displaystyle\ods t \mb E +\frac{\sigma}{\ep_0} \displaystyle \od t \mb E = -c_0^2\vcurl\ \scurl\mb E,\\
\end{equation}
in which $\sigma$ is the electrical conductivity. Let $\tau=\frac{\ep_0}{\sigma}$.
The standard Leapfrog discretization in time with semi-implicit time averaging of the low order term gives us the scheme
\begin{align}
\frac{\mb E^{n+1}-2\mb E^n+\mb E^{n-1}}{\dt^2}+\frac{\mb E^{n+1}-\mb E^{n-1}}{2\tau\dt}=-c_0^2\vcurl\ \scurl\mb E^{n}.
\end{align}
The symbol of the time discretization and its expansion in $\w$ is given by
\begin{align}
\frac{-4\sin^2\frac{\w\dt}{2}}{\dt^2}-\frac{\i}{\tau}\frac{\sin\w\dt}{\dt}=-\w^2-\frac{\i}{\tau}\w +\frac{\dt^2}{12}\w^2\left(\w^2+\frac{2\i}{\tau}\w\right)+\mc O(\dt^4).
\end{align}
Defining the Courant number to be $\nu=\frac{c\dx}{\dt}$, and discretizing in space using the MFD for a rectangular mesh gives us a discrete dispersion relation of the form
\begin{align}
0&=\frac{-4\sin^2\frac{\w\dt}{2}}{\dt^2}-\frac{\i}{\tau}\frac{\sin\w\dt}{\dt}-\FS_h(\bk) \\&= -\w^2-\frac{\i}{\tau}\w+c_0^2k^2 +\frac{h^2}{12 c_0^2}
\left(\nu^2\w^2\left(\w^2+\frac{2\i}{\tau}\w\right)+12\gamma w_2c_0^4k^4\right)+\mc O(h^4).
\end{align}
As the whole series must be equal to zero, we have that
\begin{align}
c_0^2k^2=\w^2+\frac{\i}{\tau}\w + \mc O(h^2).
\end{align}
Substituting this into the $\dx^2$ term of the Taylor series we arrive at
\begin{align}
0 = \frac{h^2}{12c_0^2}\left(\nu^2\w^2\left(\w^2+\frac{2\i}{\tau}\w\right)-12\gamma w_2\left(\w^2+\frac{\i}{\tau}\w\right)^2\right)+\mc O(h^4).
\end{align}
As
\begin{align}
\w^2\left(\w^2+\frac{2\i}{\tau}\w\right)\neq\left(\w^2+\frac{\i}{\tau}\w\right)^2, \forall \w,
\end{align}
we cannot choose $w_2$ independent of $\w$ to eliminate this term in the dispersion error.
We consider this a failure as we are interested in time domain schemes that have higher order numerical dispersion error rather than schemes
that reduce or eliminate numerical dispersion at a design frequency (though such schemes are of interest in their own right).

\end{document}